\theoremstyle{plain}
	\newtheorem{Thm}{Theorem}[section] 
	\newtheorem{Prop}[Thm]{Proposition}        
	\newtheorem{Lem}[Thm]{Lemma}           
	\newtheorem{Coro}[Thm]{Corollary}
\theoremstyle{definition}
	\newtheorem{Def}[Thm]{Definition}
\theoremstyle{remark}
	\newtheorem{Rem}[Thm]{Remark}
\def\emptyset{\varnothing}
\def\NN{{\mathbb N}}
\def\ZZ{{\mathbb Z}}
\def\cP{{\mathcal P}}
\def\cQ{{\mathcal Q}}
\def\cR{{\mathcal R}}
\def\mfA{{\mathfrak A}}
\def\mfS{{\mathfrak S}}
\newcommand{\Ker}{\operatorname{Ker}}
\newcommand{\Id}{\operatorname{Id}}
\newcommand{\eps}{\varepsilon}
\newcommand{\IET}{\operatorname{IET}}
\newcommand{\Homeo}{\operatorname{Homeo}}
\newcommand{\PC}{\operatorname{PC}}
\newcommand{\PAff}{\operatorname{PAff}}
\newcommand{\setm}{\smallsetminus}
\begin{document}

\title{Signature for piecewise continuous groups}
\author{Octave Lacourte}
\date{February 25, 2020}
\subjclass[2010]{37E05, 20F65, 20J06}
\maketitle

\begin{abstract}
Let $\widehat{\PC^{\bowtie}}$ be the group of bijections from $\mathopen{[} 0,1 \mathclose{[}$ to itself which are continuous outside a finite set. Let $\PC^{\bowtie}$ be its quotient by the subgroup of finitely supported permutations. \\
\indent We show that the Kapoudjian class of $\PC^{\bowtie}$ vanishes. That is, the quotient map $\widehat{\PC^{\bowtie}} \rightarrow \PC^{\bowtie}$ splits modulo the alternating subgroup of even permutations. This is shown by constructing a nonzero group homomorphism, called signature, from $\widehat{\PC^{\bowtie}}$ to $\faktor{\ZZ}{2\ZZ}$. Then we use this signature to list normal subgroups of every subgroup $\widehat{G}$ of $\widehat{\PC^{\bowtie}}$ which contains $\mfS_{\mathrm{fin}}$ and such that $G$, the projection of $\widehat{G}$ in $\PC^{\bowtie}$, is simple.
\end{abstract}

\section{Introduction}

Let $X$ be the right-open and left-closed interval $\mathopen{[}0,1 \mathclose{[}$. We denote by $\mfS(X)$ the group of bijections of $X$ to $X$. This group contains the subgroup composed of all finitely supported permutations is denoted by $\mfS_{\mathrm{fin}}$. The classical signature is well-defined on $\mfS_{\mathrm{fin}}$ and its kernel, denoted by $\mfA_{\mathrm{fin}}$, is the only subgroup of index $2$ in $\mfS_{\mathrm{fin}}$. An observation, originally due to Vitali \cite{Vitali}, is that the signature does not extend to $\mfS(X)$.~\\

For every subgroup $G$ of $\faktor{\mfS(X)}{\mfS_{\mathrm{fin}}}$, we denote by $\widehat{G}$ its inverse image in $\mfS(X)$. The cohomology class of the central extension $$0\to \faktor{\ZZ}{2\ZZ}= \faktor{\mfS_{\mathrm{fin}}}{\mfA_{\mathrm{fin}}} \to \faktor{\widehat{G}}{\mfA_{\mathrm{fin}}} \to G \to 1$$
is called the Kapoudjian class of $G$; it belongs to $H^2(G,\faktor{\ZZ}{2\ZZ})$. It appears in the work of Kapoudjian and Kapoudjian-Sergiescu \cite{Kapoudjian2002,KapoudjianSergiescu2005}. The vanishing of this class means that the above exact sequence splits; this means that there exists a group homomorphism from the preimage of $G$ in $\mfS(X)$ onto $\faktor{\ZZ}{2\ZZ}$ which extends the signature on $\mfS_{\mathrm{fin}}$ (for more on the Kapoudjian class, see \cite[\S 8.C]{2019arXiv190105065C}). This implies in particular that $\faktor{\widehat{G}}{\mfA_{\mathrm{fin}}}$ is isomorphic to the direct product $G \times \faktor{\ZZ}{2\ZZ}$. One can notice that for $G=\faktor{\mfS(X)}{\mfS_{\mathrm{fin}}}$ we have $\widehat{G}=\mfS(X)$; in this case the Vitali's observation implies that the Kapoudjian class does not vanish.

 The set of all permutations of $X$ continuous outside a finite set is a subgroup denoted by $\widehat{\PC^{\bowtie}}$. The aim here is to show the following theorem:

\begin{Thm}\label{Theorem Existence of a nontrivial group homomorphism onto Z/2Z}
There exists a group homomorphism $\eps: \widehat{\PC^{\bowtie}} \rightarrow \faktor{\ZZ}{2 \ZZ}$ that extends the classical signature on $\mfS_{\mathrm{fin}}$.
\end{Thm}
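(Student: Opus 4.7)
The plan is to construct $\varepsilon$ explicitly by assigning to each $f \in \widehat{\PC^{\bowtie}}$ a finite combinatorial invariant derived from a piecewise decomposition. Given $f$, I would choose a finite set $D_s \subset [0,1[$ containing $0$ and all discontinuities of $f$; then $[0,1[ \setminus D_s$ decomposes into $n$ open intervals $I_1 < \cdots < I_n$, on each of which $f$ is continuous and strictly monotone. Let $D_t := \{0\} \cup \{f(x), f(x^+), f(x^-) : x \in D_s\}$ (keeping only limits that lie in $[0,1[$), so that the pieces $J_1 < \cdots < J_n$ of $[0,1[\setminus D_t$ are precisely the images $f(I_k)$. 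This yields a permutation $\sigma \in \mfS_n$ with $f(I_k) = J_{\sigma(k)}$, orientations $s_k \in \{\pm 1\}$, and a bijection $f|_{D_s} : D_s \to D_t$, which under the inherited total orderings of $D_s$ and $D_t$ defines a permutation $\pi$. I would then set
\[
\varepsilon(f) := \operatorname{sgn}(\sigma) + \operatorname{sgn}(\pi) + r(f) \pmod{2},
\]
where $r(f) = \#\{k : s_k = -1\}$ and each $\operatorname{sgn}$ is viewed in $\faktor{\ZZ}{2\ZZ}$.

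The first task will be well-definedness. Any two admissible choices of $D_s$ admit a common refinement, so it suffices to check invariance when a single point $p$ is added to $D_s$ and $f(p)$ to $D_t$. This splits the piece $I_k \ni p$ into two adjacent pieces and correspondingly splits $J_{\sigma(k)}$. A direct inversion count shows that $\operatorname{sgn}(\sigma)$ and $\operatorname{sgn}(\pi)$ each change by a specific position-dependent sign, while $r(f)$ increases by $1$ exactly when $s_k = -1$; the three contributions should cancel modulo $2$ in both orientation cases.

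To prove the homomorphism property, I would take $D_s$ sufficiently fine to be adapted simultaneously to $f$, $g$ and $fg$, in the sense that each is continuous and monotone on every piece of the resulting partition. Under this common decomposition the data of $fg$ factors through those of $f$ and $g$: the permutations $\sigma$ multiply, the order-permutations $\pi$ multiply, and the orientation parities add, giving $\varepsilon(fg) = \varepsilon(f) + \varepsilon(g)$. Finally, for $\tau \in \mfS_{\mathrm{fin}}$ the choice $D_s = \{0\} \cup \operatorname{supp}(\tau)$ makes $\tau$ the identity on each piece, so $\sigma = \operatorname{Id}$ and $r = 0$, while $\pi$ coincides with the classical permutation of $\operatorname{supp}(\tau)$; hence $\varepsilon(\tau)$ recovers the classical signature.

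The main obstacle will be the refinement invariance: individually, $\operatorname{sgn}(\sigma)$ and $\operatorname{sgn}(\pi)$ depend in an awkward, position-dependent way on where the new point is inserted, and the delicate point is that these contributions, combined with the orientation term coming from $s_k$, cancel precisely modulo $2$. The key case is $s_k = -1$: inserting a point then places the two new target pieces in reversed order relative to the two new source pieces, producing an extra inversion that must be absorbed by the jump in $r(f)$. Getting this bookkeeping right, especially in handling the asymmetry between $D_s$ and $D_t$ coming from one-sided limits, is the heart of the argument.
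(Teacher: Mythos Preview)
Your approach is correct and closely parallels the paper's, but with a different bookkeeping that trades work between the two main steps. The paper packages your $\sigma$ and $\pi$ into a single finitely supported permutation $\sigma_{(h,\cP)}$ of $[0,1[$ (sending $h(\alpha_j)$ to the left endpoint $\beta_j$ of $h(I_j^\circ)$); under the order-identifications this is exactly $\sigma\circ\pi^{-1}$, so your three-term invariant equals the paper's two-term $R(h,\cP)+\eps_{\mathrm{fin}}(\sigma_{(h,\cP)})$. The payoff of the paper's packaging is a cleaner refinement-invariance proof: when one orientation-preserving piece is split, $\sigma_{(h,\cP)}$ is literally unchanged, and when an orientation-reversing piece is split it changes by a single transposition, matching the jump in $R$. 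In your decomposition, $\operatorname{sgn}(\sigma)$ and $\operatorname{sgn}(\pi)$ each shift by a position-dependent amount and you must check they cancel; this is exactly the ``awkward, position-dependent'' bookkeeping you anticipate, and it does work out, but it is more labor than the paper's version.

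Conversely, your decomposition buys a cleaner homomorphism proof. Your $\sigma$, $\pi$, and $r$ are each genuinely multiplicative/additive under composition once one takes $D_s$ containing the discontinuities of $g$ together with the $g$-preimages of the discontinuities of $f$; this yields $\eps(fg)=\eps(f)+\eps(g)$ directly on all of $\widehat{\PC^{\bowtie}}$. The paper's $\sigma_{(h,\cP)}$ is only multiplicative up to conjugation by $f$, and the paper additionally inserts a reduction to $\widehat{\IET^{\bowtie}}$ via a $\Homeo^+([0,1[)$-invariance lemma before carrying out the composition argument. Your route bypasses that reduction entirely, which is a genuine simplification.
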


\begin{Coro}\label{Corollary vanishing of the Kapoudjian class}
Let $G$ be a subgroup of $\PC^{\bowtie}$. Then the Kapoudjian class of $G$ is zero.
\end{Coro}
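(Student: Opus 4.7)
The plan is to deduce this corollary as a formal consequence of Theorem~\ref{Theorem Existence of a nontrivial group homomorphism onto Z/2Z}; there is no genuine obstacle, since the hard work is the construction of $\eps$ itself.

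First, I would note that because $G$ is a subgroup of $\PC^{\bowtie}$, its preimage $\widehat{G}$ in $\mfS(X)$ lies inside $\widehat{\PC^{\bowtie}}$ and automatically contains $\mfS_{\mathrm{fin}}$. Restricting $\eps$ yields a homomorphism $\eps|_{\widehat{G}}\colon\widehat{G}\to \faktor{\ZZ}{2\ZZ}$ whose restriction to $\mfS_{\mathrm{fin}}$ is the classical signature. In particular $\mfA_{\mathrm{fin}}\subseteq \Ker(\eps|_{\widehat{G}})$, so $\eps|_{\widehat{G}}$ descends to a homomorphism $\bar{\eps}\colon\faktor{\widehat{G}}{\mfA_{\mathrm{fin}}}\to \faktor{\ZZ}{2\ZZ}$.

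Next, I would verify that $\bar{\eps}$ restricts to an isomorphism on the central subgroup $\faktor{\mfS_{\mathrm{fin}}}{\mfA_{\mathrm{fin}}}\cong\faktor{\ZZ}{2\ZZ}$: indeed it sends the class of any transposition to~$1$, because $\eps$ extends the classical signature. Hence $\bar{\eps}$ is a retraction of the inclusion $\faktor{\ZZ}{2\ZZ}\hookrightarrow \faktor{\widehat{G}}{\mfA_{\mathrm{fin}}}$ in the central extension
$$0\to \faktor{\ZZ}{2\ZZ}\to \faktor{\widehat{G}}{\mfA_{\mathrm{fin}}}\to G\to 1.$$
Denoting by $\pi$ the projection onto $G$, the map $x\mapsto (\bar{\eps}(x),\pi(x))$ then furnishes an isomorphism $\faktor{\widehat{G}}{\mfA_{\mathrm{fin}}}\cong \faktor{\ZZ}{2\ZZ}\times G$, so the extension splits. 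Therefore its cohomology class in $H^2(G,\faktor{\ZZ}{2\ZZ})$, which is by definition the Kapoudjian class of $G$, vanishes.
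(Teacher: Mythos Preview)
Your argument is correct and is exactly the approach the paper takes: the introduction states that vanishing of the Kapoudjian class is equivalent to the existence of a homomorphism $\widehat{G}\to\faktor{\ZZ}{2\ZZ}$ extending the signature, and Section~\ref{Section Normal subgroups of widehat PC^ bowtie and some subgroups} spells out the same retraction argument you give to conclude $\faktor{\widehat{G}}{\mfA_{\mathrm{fin}}}\cong \faktor{\ZZ}{2\ZZ}\times G$.
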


This solves a question asked by Y. Cornulier \cite[Question 1.15]{Cornulier_rea}.\\

The subgroup of $\widehat{\PC^{\bowtie}}$ consisting of all permutations of $X$ that are piecewise isometric elements is denoted by $\widehat{\IET^{\bowtie}}$ and the one consisting of all piecewise affine permutations of $X$ is denoted by $\widehat{\PAff^{\bowtie}}$. We also consider for each of these groups the subgroup composed of all piecewise orientation-preserving elements by replacing the symbol $``\bowtie"$ by the symbol $``+"$.\\
Let us observe that when $G \subset \PC^+$ Corollary \ref{Corollary vanishing of the Kapoudjian class} is trivial. Indeed, in this case $G$ can be lifted inside $\widehat{\PC^+}$ itself. However, such a lift does not exist for $\PC^{\bowtie}$ or even $\IET^{\bowtie}$, as was proved in \cite{Cornulier_rea}.\\

The idea of proof of Theorem \ref{Theorem Existence of a nontrivial group homomorphism onto Z/2Z} is to associate for every $f \in \widehat{\PC^{\bowtie}}$ and every finite partition $\cP$ of $\mathopen{[} 0,1 \mathclose{[}$ into intervals associated with $f$, two numbers. The first is the number of interval of $\cP$ where $f$ is order-reversing and the second is the signature of a particular finitely supported permutation. The next step is to prove that the sum modulo $2$ of this two numbers is independent from the choice of partition. Then we show that it is enough to prove that $\eps|_{\IET^{\bowtie}}$ is a group homomorphism. For this we show that it is additive when we look at the composition of two elements of $\widehat{\IET^{\bowtie}}$ by calculate the value of the signature with a particular partition.\\

In Section \ref{Section Normal subgroups of widehat PC^ bowtie and some subgroups}, we apply these results to the study of normal subgroups of $\widehat{\PC^{\bowtie}}$ and certain subgroups. More specifically we prove:

\begin{Thm}\label{Theorem Normal subgroups of some subgroups of widehat PC bowtie}
Let $\widehat{G}$ be a subgroup of $\widehat{\PC^{\bowtie}}$ containing $\mfS_{\mathrm{fin}}$ and such that its projection $G$ in $\PC^{\bowtie}$ is simple nonabelian. Then $\widehat{G}$ has exactly five normal subgroups given by the list: $\lbrace \lbrace 1 \rbrace , \mfA_{\mathrm{fin}}, \mfS_{\mathrm{fin}} , \Ker(\eps) , \widehat{G} \rbrace$.
\end{Thm}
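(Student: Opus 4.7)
The plan is first to verify that each of the five listed subgroups is normal in $\widehat{G}$ and that they are pairwise distinct, and then to show that every normal subgroup $N$ of $\widehat{G}$ coincides with one of them. Normality of $\{1\}$ and $\widehat{G}$ is trivial; $\mfS_{\mathrm{fin}}$ is characteristic in $\mfS(X)$ and so is normal in $\widehat{G}$; $\mfA_{\mathrm{fin}}$ is characteristic in $\mfS_{\mathrm{fin}}$ and thus normal in $\widehat{G}$; and $\Ker(\eps)$ is a kernel. The five subgroups are pairwise distinct because $\eps$ restricted to $\mfS_{\mathrm{fin}}$ is the nontrivial classical signature.

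For the converse, let $N$ be a normal subgroup of $\widehat{G}$ and consider its image $\pi(N)$ under the quotient $\pi \colon \widehat{G} \to G$. By simplicity of $G$, either $\pi(N) = \{1\}$ or $\pi(N) = G$. In the first case $N \subseteq \mfS_{\mathrm{fin}}$, and by the classical description of normal subgroups of the finitary symmetric group on an infinite set one has $N \in \{\{1\}, \mfA_{\mathrm{fin}}, \mfS_{\mathrm{fin}}\}$, which accounts for three of the five.

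Suppose now $\pi(N) = G$, so that $N \cdot \mfS_{\mathrm{fin}} = \widehat{G}$, and analyze the normal subgroup $N \cap \mfS_{\mathrm{fin}}$ of $\mfS_{\mathrm{fin}}$, which is again one of $\{1\}$, $\mfA_{\mathrm{fin}}$, $\mfS_{\mathrm{fin}}$. The case $N \cap \mfS_{\mathrm{fin}} = \mfS_{\mathrm{fin}}$ immediately yields $N = \widehat{G}$. The case $N \cap \mfS_{\mathrm{fin}} = \{1\}$ can be ruled out by a centralizer argument: since $N$ and $\mfS_{\mathrm{fin}}$ are both normal in $\widehat{G}$ with trivial intersection, $[N, \mfS_{\mathrm{fin}}] = \{1\}$, so $N$ centralizes $\mfS_{\mathrm{fin}}$ inside $\mfS(X)$; but any bijection of $X$ commuting with every transposition of $X$ is forced to be the identity, hence $N = \{1\}$, contradicting $\pi(N) = G$.

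The remaining case, $N \cap \mfS_{\mathrm{fin}} = \mfA_{\mathrm{fin}}$, is the main point. By the second isomorphism theorem, $\widehat{G}/N \cong \mfS_{\mathrm{fin}}/\mfA_{\mathrm{fin}} \cong \ZZ/2\ZZ$, so $N$ is the kernel of a nontrivial homomorphism $\widehat{G} \to \ZZ/2\ZZ$ that vanishes on $\mfA_{\mathrm{fin}}$, equivalently a nontrivial homomorphism $\widehat{G}/\mfA_{\mathrm{fin}} \to \ZZ/2\ZZ$. By Corollary \ref{Corollary vanishing of the Kapoudjian class}, the central extension $1 \to \mfS_{\mathrm{fin}}/\mfA_{\mathrm{fin}} \to \widehat{G}/\mfA_{\mathrm{fin}} \to G \to 1$ splits, giving $\widehat{G}/\mfA_{\mathrm{fin}} \cong G \times \ZZ/2\ZZ$. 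Since $G$ is simple nonabelian, it has no nontrivial homomorphism to $\ZZ/2\ZZ$, so the only nontrivial homomorphism from $G \times \ZZ/2\ZZ$ to $\ZZ/2\ZZ$ is the projection onto the second factor; this is precisely the homomorphism induced by $\eps$, whence $N = \Ker(\eps)$. Identifying this unique index-$2$ subgroup is the main obstacle, and it is exactly where Theorem \ref{Theorem Existence of a nontrivial group homomorphism onto Z/2Z} (via Corollary \ref{Corollary vanishing of the Kapoudjian class}) enters the argument in an essential way.
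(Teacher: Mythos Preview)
Your proof is correct. The paper organizes the argument a bit differently: instead of projecting $N$ to $G$ and then casing on $N\cap\mfS_{\mathrm{fin}}$, it first uses the trivial-centralizer fact (which you invoke only in one sub-case) to show that \emph{every} nontrivial normal subgroup of $\widehat{G}$ already contains $\mfA_{\mathrm{fin}}$, and then works entirely in the quotient $\widehat{G}/\mfA_{\mathrm{fin}}\cong G\times\faktor{\ZZ}{2\ZZ}$, whose normal subgroups are exactly the four obvious products since $G$ is simple nonabelian. This yields at most five normal subgroups in one stroke, without a separate appeal to the classification of normal subgroups of $\mfS_{\mathrm{fin}}$ and without the case $N\cap\mfS_{\mathrm{fin}}=\{1\}$. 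Your route is slightly longer but equally valid; one small point worth making explicit in your distinctness check is that $\Ker(\eps)\neq\mfA_{\mathrm{fin}}$ (equivalently $\Ker(\eps)\not\subseteq\mfS_{\mathrm{fin}}$), which the paper handles by picking $g\in\widehat{G}\setminus\mfS_{\mathrm{fin}}$ and noting that either $g$ or $\sigma g$ lies in $\Ker(\eps)\setminus\mfS_{\mathrm{fin}}$ for any transposition $\sigma$.
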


We denote by $\widehat{\IET^+_{\text{rc}}}$ the subgroup of $\widehat{\IET^+}$ composed of all right-continuous elements. We know that it is naturally isomorphic to $\IET^+$. The same is true when we replace $\IET^+$ by $\PAff^+$ or $\PC^+$. This allows us to use the work of P. Arnoux \cite{ArnouxThese} and the one of N. Guelman and I. Liousse \cite{GuelmanLiousse2019} where they prove that $\IET^{\bowtie}$, $\PC^+$ and $\PAff^+$ are simple. From this we deduce:

\begin{Thm}
The groups $\PC^{\bowtie}$ and $\PAff^{\bowtie}$ are simple.
\end{Thm}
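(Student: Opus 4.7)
Let $G$ stand for either $\PC^{\bowtie}$ or $\PAff^{\bowtie}$, and let $G^+$ denote the corresponding orientation-preserving subgroup ($\PC^+$ or $\PAff^+$), which is simple by Guelman--Liousse; similarly $\IET^{\bowtie}$ is simple by Arnoux, and both $\IET^{\bowtie}$ and $G^+$ sit naturally as subgroups of $G$. I would prove that every nontrivial normal subgroup $N$ of $G$ must equal $G$, via the scheme: first show $G^+\subset N$ using a commutator trick on an arbitrary nontrivial element of $N$, then deduce $\IET^{\bowtie}\subset N$, and finally conclude via a generation lemma.

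\emph{Step~1: generation.} I would first show $G=\langle\IET^{\bowtie},G^+\rangle$. For $f\in G$ with an adapted partition $\{I_1,\dots,I_n\}$, let $h\in\IET^{\bowtie}$ be the element which, on each image $f(I_k)$, equals the orientation-reversing affine involution of $f(I_k)$ when $f|_{I_k}$ reverses orientation and is the identity otherwise. Then $h\circ f$ is orientation-preserving on every piece and still piecewise continuous (respectively piecewise affine), hence lies in $G^+$. Consequently $f=h^{-1}\cdot(h\circ f)$ belongs to the subgroup generated by $\IET^{\bowtie}$ and $G^+$.

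\emph{Step~2: commutator trick.} Given a nontrivial $n\in N$ with lift $\widehat{n}\in\widehat{G}$, pick a continuity point $x_0$ of $\widehat{n}$ with $\widehat{n}(x_0)\neq x_0$ (this exists because $\widehat{n}\notin\mfS_{\mathrm{fin}}$ moves a positive-measure set while having only finitely many discontinuities), and choose an open interval $I\ni x_0$ small enough that $\widehat{n}|_I$ is monotone continuous and $I\cap\widehat{n}(I)=\varnothing$. Pick any nontrivial $g\in G^+$ supported in $I$. A short computation shows that the commutator $c=[n,g]\in N$ is the identity outside $I\cup\widehat{n}(I)$, equals $g^{-1}$ on $I$, and equals $\widehat{n}\, g\,\widehat{n}^{-1}$ on $\widehat{n}(I)$. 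Because $\widehat{n}|_I$ has a single orientation $\varepsilon\in\{\pm 1\}$, at each $y\in\widehat{n}(I)$ both arguments $n^{-1}(y)$ and $g(n^{-1}(y))$ lie in $I$, so the two $\widehat{n}$-factors in $\widehat{n} g\widehat{n}^{-1}$ each contribute sign $\varepsilon$ and their product is $+1$; hence $\widehat{n} g\widehat{n}^{-1}|_{\widehat{n}(I)}$ is orientation-preserving and $c\in G^+$. Since $c|_I=g^{-1}\neq 1$, the element $c$ is nontrivial, and simplicity of $G^+$ forces $G^+\subset N$.

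\emph{Step~3: conclusion.} Since $\IET^+\subset G^+\subset N$, the intersection $N\cap\IET^{\bowtie}$ is a nontrivial normal subgroup of the simple group $\IET^{\bowtie}$, and therefore equals $\IET^{\bowtie}$. Combined with $G^+\subset N$ and Step~1 this yields $N\supset\langle G^+,\IET^{\bowtie}\rangle=G$, so $N=G$. The main obstacle is the orientation computation in Step~2: it is essential to shrink $I$ to a single continuity interval of $\widehat{n}$ so that both evaluations of the sign of $\widehat{n}'$ appearing in $\widehat{n} g\widehat{n}^{-1}$ return the same value and cancel; if $\widehat{n}$ changed orientation inside $I$, one would lose the conclusion $c\in G^+$, and the passage through the simple subgroup $G^+$ (rather than directly through $\IET^{\bowtie}$) is precisely what makes this cancellation work.
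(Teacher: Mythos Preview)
Your argument is correct and complete. The overall architecture---prove $G=\langle G^+,\IET^{\bowtie}\rangle$, produce a nontrivial element of $N$ inside one of the two simple subgroups via a commutator, then swallow the other one---is the same as the paper's, but you run it in the opposite order. The paper takes the commutator of the given $n\in N$ with an $I$-\emph{flip}, observes that the resulting element is (conjugate to) a single flip, then conjugates within $\PAff^+$ to obtain flips of every length and hence all of $\IET^{\bowtie}$ (via the lemma that flips generate $\IET^{\bowtie}$); only afterwards does it invoke the simplicity of $G^+$. You instead commute $n$ with a nontrivial $g\in G^+$ supported in a small interval on which $\widehat{n}$ is monotone, and your sign-cancellation argument shows directly that $[n,g]\in G^+\setminus\{1\}$; simplicity of $G^+$ then gives $G^+\subset N$, and you reach $\IET^{\bowtie}$ at the end via $\IET^+\subset G^+$. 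Your route is slightly more economical: it bypasses the step of identifying the commutator with (a conjugate of) a specific flip and the appeal to the ``flips generate $\IET^{\bowtie}$'' lemma, at the cost of the small orientation computation you flagged.
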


This gives us some examples of groups that satisfy the conditions of the Theorem \ref{Theorem Normal subgroups of some subgroups of widehat PC bowtie}.
~\\

Finally Section \ref{Section About some Normalizers} is independent and we study some normalizers and in particular we show that the behaviour when we look the group inside $\widehat{\PC^{\bowtie}}$ or $\PC^{\bowtie}$ may not be the same. We denote by $\cR \in \IET^{\bowtie}$ the map $x \mapsto 1-x$. Then we define $\IET^-$ as the coset $\cR . \IET^+$ and $\PC^-$ as the coset $\cR . \PC^+$. Then the groups $\IET^{\pm}:= \IET^+ \cup \IET^+$ and $\PC^{\pm}:= \PC^+ \cup \PC^-$ are well-defined.

\begin{Prop}
The subgroup $\widehat{\IET^+_{\mathrm{rc}}}$ (resp.\ $\widehat{\PC^+_{\mathrm{rc}}}$) is its own normalizer in $\widehat{\IET^{\bowtie}}$ (resp.\ $\widehat{\PC^+_{\mathrm{rc}}}$).
The normalizer of $\IET^+$ ($\PC^+$ respectively) in $\IET^{\bowtie}$ ($\PC^{\bowtie}$ respectively) is $\IET^{\pm}$ ($\PC^{\pm}$ respectively).
\end{Prop}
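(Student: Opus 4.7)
The statement has two parts; I would prove the second (the normalizer of $\IET^+$ in $\IET^{\bowtie}$, analogously for $\PC$) first and bootstrap it to the first (self-normalizer in the covering group). The $\PC$ case runs verbatim since only the orientation behaviour and right-continuity are used. For the second part, the inclusion $\IET^{\pm} \subseteq N_{\IET^{\bowtie}}(\IET^+)$ is automatic because $\IET^+$ has index $2$ in $\IET^{\pm}$. For the reverse, suppose $g \in \IET^{\bowtie}$ normalizes $\IET^+$, with continuity pieces $I_1, \ldots, I_k$ carrying orientation signs $\varepsilon_i \in \{+, -\}$. I would test normality against the element $h \in \IET^+$ that swaps a short subinterval $A \subset g(I_i)$ with an equally long $B \subset g(I_j)$ by a pair of translations; a direct piece-by-piece computation of $g^{-1} h g$ shows it is orientation-preserving on every piece exactly when $\varepsilon_i = \varepsilon_j$. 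Running this over all pairs forces the $\varepsilon_i$ to be constant, so $g \in \IET^+ \cup \IET^- = \IET^{\pm}$.

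For the first part, let $g \in \widehat{\IET^{\bowtie}}$ normalize $N := \widehat{\IET^+_{\mathrm{rc}}}$. The projection $\bar g$ normalizes $\IET^+$ and hence lies in $\IET^{\pm}$ by the previous step. To rule out $\bar g \in \IET^-$: if $g$ reverses orientation on each piece then $g^{-1}$ sends right half-neighborhoods to left half-neighborhoods, so for any $h \in N$ with a jump at an interior point $y_0$ of a piece of $g^{-1}$ one gets
\[
\lim_{x \to x_0^+} (ghg^{-1})(x) \;=\; g\bigl(\lim_{y \to y_0^-} h(y)\bigr) \;\neq\; g(h(y_0)) \;=\; (ghg^{-1})(x_0),
\]
where $x_0 = g(y_0)$ and the inequality comes from injectivity of $g$ together with the jump of $h$ at $y_0$. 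Thus $ghg^{-1}$ is not right-continuous at $x_0$, contradicting $gNg^{-1}=N$. Hence $\bar g \in \IET^+$ and $g \in \widehat{\IET^+} = N \cdot \mfS_{\mathrm{fin}}$; since $N \cap \mfS_{\mathrm{fin}} = \{1\}$ (a right-continuous IET that agrees with the identity off a finite set is the identity), $g$ writes uniquely as $\sigma h$ with $\sigma \in \mfS_{\mathrm{fin}}$ and $h \in N$.

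Finally, $h$ already normalizes $N$, so $\sigma$ must too. If $\sigma \neq 1$, pick $a \neq b$ with $\sigma(b)=a$. For every $f \in N$, $(\sigma f \sigma^{-1})(a) = \sigma(f(b))$, while the right limit of $\sigma f \sigma^{-1}$ at $a$ equals the right limit of $f$ at $a$, namely $f(a)$, because $\sigma f \sigma^{-1}$ agrees with $f$ off a finite set and $f$ is right-continuous. Right-continuity of $\sigma f \sigma^{-1}$ forces $\sigma(f(b)) = f(a)$ for every $f \in N$; choosing $f \in N$ with $f(b)$ outside $\mathrm{supp}(\sigma)$ and $f(b) \neq f(a)$ gives $f(b) = f(a)$, contradicting injectivity of $f$. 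Therefore $\sigma = 1$ and $g \in N$. The main difficulty lies in the orientation-reversing case: one must produce an $h \in N$ with a genuine jump at a carefully chosen interior point of a piece of $g$, taking care to place $y_0$ away from the finitely many break-points where one-sided-limit arguments would fail; the flexibility of $N$ (which acts naturally as $\IET^+$) makes such a choice straightforward.
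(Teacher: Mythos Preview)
Your argument is correct and rests on the same two mechanisms the paper uses---conjugating a swap to detect a mixed orientation pattern, and conjugating a right-continuous swap to break right-continuity---but the organization is genuinely different. The paper proves the two statements independently: for the self-normalizer result it splits directly into the cases $g\in\widehat{\IET^+}\setminus\widehat{\IET^+_{\mathrm{rc}}}$ (reduce to $g\in\mfS_{\mathrm{fin}}$ and exhibit a swap whose conjugate fails right-continuity at a point of the support) and $g\notin\widehat{\IET^+}$ (pick two consecutive subintervals $I,J$ inside an order-reversing piece and show that conjugating the $I\leftrightarrow J$ swap fixes the left endpoint of $g(J)$ while moving its interior). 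You instead prove the quotient statement first and feed it back: this lets you assume $\bar g\in\IET^{\pm}$ from the outset, so the orientation-reversing case is the uniform one ($\bar g\in\IET^-$) rather than the mixed one, and the remaining case is exactly the $\mfS_{\mathrm{fin}}$ reduction. Your route is a bit more conceptual and economical (in particular, two test intervals suffice in the quotient argument, where the paper uses four), at the cost of making the two statements logically dependent.

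One small point worth tightening: in your displayed limit for the $\bar g\in\IET^-$ case you implicitly use continuity of $g$ not only near $y_0$ but also at $L=\lim_{y\to y_0^-}h(y)$, so that $\lim_{x\to x_0^+}(ghg^{-1})(x)=g(L)$ rather than merely $\lim_{z\to L^-}g(z)$. Since $g$ has finitely many break points and you are free to choose $h$, you can arrange $L$ to avoid them; your closing remark about placing $y_0$ carefully should be extended to cover $L$ as well.
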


\medskip

\noindent \textbf{Acknowledgments.} I would like to thank Y. Cornulier, P. de la Harpe and N. Matte Bon for corrections, remarks and discussions on preliminary versions of this paper.

\section{Preliminaries}

For every real interval $I$ we denote by $I^{\circ}$ its interior in $\mathbb{R}$ and if $I=\mathopen{[}0,t \mathclose{[}$ we agree that its interior is $\mathopen{]}0,t \mathclose{[}$.

\subsection{Partitions associated}~\

An important tool to study elements in $\widehat{\PC^{\bowtie}}$ and $\PC^{\bowtie}$ are partitions into intervals of $\mathopen{[}0,1 \mathclose{[}$. All partitions are assumed to be finite.

\begin{Def}\label{Definition partition associated}
For every $f$ in $\widehat{\PC^{\bowtie}}$, a finite partition $\cP$ into right-open and left-closed intervals of $\mathopen{[}0,1 \mathclose{[}$ is called \textit{a partition into intervals associated with $f$} if and only if $f$ is continuous on the interior of every interval of $\cP$. We denote by $\Pi_f$ the set of all partitions into intervals associated with $f$.\\
We define also \textit{the arrival partition of $f$ associated with $\cP$}, denoted $f(\cP)$, the partition of $\mathopen{[}0,1\mathclose{[}$ composed of all right-open and left-closed intervals such that their interior is equal to the image by $f$ of the interior of an interval of $\cP$.
\end{Def}
 
\begin{Rem}
For every $f$ in $\widehat{\PC^{\bowtie}}$ there exists a unique partition $\cP_f^{\min}$ associated with $f$ which has a minimal number of intervals. It is actually minimal in the sense of refinement: $\Pi_f$ consists precisely of the set of partitions refining $\mathcal{P}^{\min}_f$.
\end{Rem}

\subsection{Decompositions}~\

We define a family of elements which plays an important role inside our groups:

\begin{Def}
Let $I$ be a non-empty right-open and left-closed subinterval of $\mathopen{[} 0,1 \mathclose{[}$. The element $f \in \widehat{\PC^{\bowtie}}$ which sends the interior of $I$ on itself with slope $-1$ while fixing the rest of $\mathopen{[} 0,1 \mathclose{[}$ is called the \textit{I-flip}. We define \textit{a flip} as any $I$-flip for some $I$.
\end{Def}

From the definition we deduce a decomposition inside $\widehat{\IET^{\bowtie}}$ and $\widehat{\PC^{\bowtie}}$. 

\begin{Prop}\label{Proposition Decomposition in widehat(IET^bowtie)}
Let $h$ be an element of $\widehat{\IET^{\bowtie}}$. There exist $f,g \in \widehat{\IET^{+}_{\mathrm{rc}}}$ and $r,s$ finite products of flips and $\sigma,\tau$ finitely supported permutations such that $h=r\sigma f=g \tau s$.
\end{Prop}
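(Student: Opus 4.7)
The plan is to compensate the orientation-reversing pieces of $h$ by composing with flips on either the target or the source side, and then absorb the finitely many endpoint discrepancies left over into a finitely supported permutation. Let $\mathcal{P}^{\min}_h = \{I_1, \ldots, I_k\}$ be the minimal associated partition of $h$, with $I_i = [t_{i-1}, t_i[$, and let $E \subseteq \{1, \ldots, k\}$ be the set of indices on whose interior $h$ is orientation-reversing; set $J_i := h(I_i)$. Since $h$ is a bijection, both families $\{I_i^{\circ}\}$ and $\{J_i^{\circ}\}$ are pairwise disjoint.

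For the decomposition $h = r\sigma f$, define $r$ to be the composition of the $J_i$-flips for $i \in E$. Because the $J_i^{\circ}$ are pairwise disjoint, these flips commute pairwise, so $r$ is a genuine finite product of flips with $r^{-1} = r$. A direct computation shows that $rh$ is orientation-preserving on each $I_i^{\circ}$: for $i \notin E$ nothing changes, while for $i \in E$, writing $h(x) = c_i - x$ on $I_i^{\circ}$ and $J_i = [a_i, b_i[$, the $J_i$-flip sends $h(x)$ to $x + (a_i + b_i - c_i)$. Let $f \in \widehat{\IET^+_{\mathrm{rc}}}$ be the unique right-continuous orientation-preserving IET agreeing with $rh$ on each $I_i^{\circ}$; this is a well-defined bijection of $[0,1[$ because the family of image intervals of $f$ on $\mathcal{P}^{\min}_h$ coincides with the arrival partition $h(\mathcal{P}^{\min}_h)$, which is itself a partition of $[0,1[$. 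Then $\sigma := (rh) f^{-1}$ is the identity on the cofinite set $\bigcup_i f(I_i^{\circ})$, so $\sigma \in \mfS_{\mathrm{fin}}$, and rearranging (using $r^2 = 1$) yields $h = r\sigma f$.

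The dual decomposition $h = g\tau s$ is obtained symmetrically by flipping on the source side: take $s$ to be the product of the $I_i$-flips for $i \in E$, observe that $hs$ is orientation-preserving on each $I_i^{\circ}$, let $g \in \widehat{\IET^+_{\mathrm{rc}}}$ be the right-continuous orientation-preserving IET matching $hs$ on each interior, and set $\tau := g^{-1}(hs)$, which is again identity off a finite set. Then $hs = g\tau$, hence $h = g\tau s$.

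The main technical point throughout is the endpoint bookkeeping: one must check that the right-continuous extensions $f$ and $g$ really define bijections of $[0,1[$, and that the residual maps $\sigma$ and $\tau$ are genuine finitely supported permutations rather than merely partially defined correspondences. This uses in an essential way the fact that the arrival partition of a piecewise-isometric bijection is itself a partition of $[0,1[$ into right-open and left-closed intervals, so that the finite set of partition endpoints on the source matches, up to a finite permutation, a corresponding finite set of endpoints on the target. Once this is done, no further work is required; the construction is essentially algorithmic.
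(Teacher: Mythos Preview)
Your argument is correct and follows essentially the same route as the paper: flip on the target intervals where $h$ reverses orientation to land in $\widehat{\IET^+}$, then peel off a finitely supported permutation to reach $\widehat{\IET^+_{\mathrm{rc}}}$. The only cosmetic difference is that the paper obtains the second decomposition $h=g\tau s$ by applying the first one to $h^{-1}$ rather than by flipping directly on the source side, and a small notational caveat: $h(I_i)$ need not literally be a half-open interval, so your $J_i$ should be taken as the element of the arrival partition with interior $h(I_i^{\circ})$ (which is what you implicitly use anyway).
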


\begin{proof}
Let $h$ be an element of $\widehat{\IET^{\bowtie}}$, $n \in \NN$ and $\cP:= \lbrace I_1,I_2, \ldots ,I_n\rbrace \in \Pi_h$ (\S~\ref{Definition partition associated}). We denote by $h(\cP):= \lbrace J_1,J_2, \ldots , J_n \rbrace$ the arrival partition of $h$ associated with $\cP$. Let $g$ be the map that sends $I_j^{\circ}$ on $J_j^{\circ}$ by preserving the order and acts as $h$ for every left endpoints of $I_j$ for every $1 \leq j \leq n$. Note that $g$ is bijective and then belongs to $\widehat{\IET^+}$. For $1 \leq j \leq n$ let $r_j$ be the $J_j$-flip if $h$ is order-reversing on $I_j$ otherwise let $r_j$ be the identity. Let $r$ be the product of all $r_j$, we can notice that $r$ fixes all endpoints of $J_j$ for every $1 \leq j \leq n$. Then it is just a verification to check that $h=rg$. Now as $g$ belongs to $\widehat{\IET^{+}}$ there exists $\sigma$ in $\mfS_n$ such that $g=\sigma f$ with $f$ in $\widehat{\IET^+_{\text{rc}}}$.\\
The other decomposition follows by decomposing $h^{-1}$ under the previous decomposition.
\end{proof}

\begin{Prop}\label{Proposition Decomposition in widehat(PC^bowtie)}
For every $h$ in $\widehat{\PC^{\bowtie}}$ there exist $\phi$ and $\psi$ two order-preserving homeomorphisms of $\mathopen{[} 0,1 \mathclose{[}$ and $f,g$ in $\widehat{\IET^{\bowtie}}$ such that $h=\psi \circ f= g \circ \phi$.
\end{Prop}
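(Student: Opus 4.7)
My plan is to mimic the proof of Proposition~\ref{Proposition Decomposition in widehat(IET^bowtie)}, but to replace the ``permutation plus flips'' combinatorial factor by an arbitrary order-preserving homeomorphism: allowing the topological factor more flexibility than a piecewise isometry lets it absorb the metric distortion of $h$ within each piece of an associated partition.

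More precisely, I would pick $\cP = \{I_1, \ldots, I_n\} \in \Pi_h$ ordered $I_1 < \cdots < I_n$ from left to right, and write $h(\cP) = \{J_1, \ldots, J_n\}$ with $J_1 < \cdots < J_n$. Let $\tau \in \mfS_n$ be the permutation defined by $h(I_j^{\circ}) = J_{\tau(j)}^{\circ}$, and let $\delta_j \in \{+1,-1\}$ record whether $h$ preserves or reverses orientation on $I_j^{\circ}$. I would then introduce an auxiliary partition $\cP' = \{I'_1, \ldots, I'_n\}$ of $\mathopen{[}0,1\mathclose{[}$ with $I'_1 < \cdots < I'_n$ and lengths $|I'_k| = |I_{\tau^{-1}(k)}|$, and define $f \in \widehat{\IET^{\bowtie}}$ by letting its restriction to $I_j^{\circ}$ be the unique isometry sending $I_j^{\circ}$ onto $I'_{\tau(j)}^{\circ}$ with orientation $\delta_j$. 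The finite set of remaining boundary points is then reassigned so that $f$ becomes a bijection of $\mathopen{[}0,1\mathclose{[}$, which is possible since only finitely many points are involved.

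Next I would set $\psi := h \circ f^{-1}$. On each $I'_{\tau(j)}^{\circ}$, the map $\psi$ is the composition of two homeomorphisms of the same orientation (thanks to the matching of orientations in the construction of $f$), hence an order-preserving continuous bijection from $I'_{\tau(j)}^{\circ}$ onto $J_{\tau(j)}^{\circ}$. Since the pieces of $\cP'$ are arranged left to right in the order $k = 1, \ldots, n$, the map $\psi$ sends the $k$-th piece from the left onto $J_k$, where the $J_k$ are also consecutive left to right. Consequently the right-sided limit of $\psi$ at the common endpoint of $I'_k$ and $I'_{k+1}$ equals the value of $\psi$ there, so $\psi$ patches into a continuous, order-preserving bijection of $\mathopen{[}0,1\mathclose{[}$; any such bijection of a linearly ordered interval is automatically a homeomorphism.

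The second decomposition $h = g \circ \phi$ then follows by applying the above to $h^{-1}$ to obtain $h^{-1} = \psi_0 \circ f_0$ and setting $g := f_0^{-1} \in \widehat{\IET^{\bowtie}}$, $\phi := \psi_0^{-1}$, which is again an order-preserving homeomorphism. I expect the main point to be careful about is the choice of the auxiliary partition $\cP'$: its pieces must be ordered so that $\psi$ sends consecutive pieces of $\cP'$ to consecutive pieces of $h(\cP)$, which is what forces the one-sided limits of $\psi$ to agree at each interior boundary. Once that ordering condition is enforced via the permutation $\tau^{-1}$, continuity of $\psi$ is automatic and the rest of the argument is bookkeeping.
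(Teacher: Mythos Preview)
Your approach is essentially the paper's idea carried out more explicitly: factor $h$ as a homeomorphism composed with a piecewise isometry by matching up lengths of pieces. The paper's three-line proof simply asserts the existence of $\phi,\psi\in\Homeo^+(\mathopen{[}0,1\mathclose{[})$ with $\lambda(\phi(I))=\lambda(h(I))$ and $\lambda(\psi(h(I)))=\lambda(I)$ for each $I\in\cP$, so that $h\circ\phi$ and $\psi\circ h$ land in $\widehat{\IET^{\bowtie}}$. Your auxiliary partition $\cP'$ with $|I'_k|=|I_{\tau^{-1}(k)}|$ is precisely the target partition such a homeomorphism would produce.

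There is, however, one genuine gap. You extend $f$ from the open pieces to the finite set of left endpoints by an \emph{arbitrary} bijection, and then assert that $\psi=h\circ f^{-1}$ is continuous because its right-sided limit at the common endpoint $p_{k+1}$ of $I'_k$ and $I'_{k+1}$ equals its value there. The right limit is indeed the left endpoint $b_{k+1}$ of $J_{k+1}$, but the value is $\psi(p_{k+1})=h\bigl(f^{-1}(p_{k+1})\bigr)$, and this equals $b_{k+1}$ only for the specific choice $f^{-1}(p_{k+1})=h^{-1}(b_{k+1})$. A generic bijective extension yields a $\psi$ that is discontinuous at finitely many points, hence not a homeomorphism.

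The fix is immediate, and is in effect what the paper does by building the homeomorphism first rather than the isometric factor. Define $\psi$ on each $(I'_k)^{\circ}$ exactly as you do; your ordering argument then shows that the one-sided limits agree at every interior endpoint, so $\psi$ extends uniquely to a continuous order-preserving bijection of $\mathopen{[}0,1\mathclose{[}$. \emph{Now} set $f:=\psi^{-1}\circ h$. This is automatically a bijection of $\mathopen{[}0,1\mathclose{[}$, agrees on each $I_j^{\circ}$ with the isometry you described, and sends the left endpoints $\{a_j\}$ bijectively to $\{p_k\}$ (since $h$ maps $\{a_j\}$ bijectively to $\{b_k\}$ and $\psi^{-1}(b_k)=p_k$). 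Constructing $\psi$ before $f$ removes the ambiguity at the boundary points entirely.
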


\begin{proof}
Let $\lambda$ be the Lebesgue measure on $\mathopen{[}0,1 \mathclose{[}$. Let $h \in \widehat{\PC^{\bowtie}}$ and $\cP \in \Pi_h$. Then there exist $\phi,\psi \in \Homeo^+(\mathopen{[} 0,1 \mathclose{[})$ such that for every $I \in \cP$, $\lambda(\phi(I))=\lambda(h(I))$ and $\lambda(\psi(h(I)))=\lambda(I)$. Then $h \circ \phi$ and $\psi \circ h $ belongs to $\widehat{\IET^{\bowtie}}$.
\end{proof}

\section{Construction of the signature homomorphism}\label{Section The signature homomorphism}

In our case we have $X= \mathopen{[}0,1 \mathclose{[}$ and $\widehat{\PC^{\bowtie}}$ is a subgroup of $\mfS(X)$. We denote here $\mfS_{\mathrm{fin}}=\mfS_{\mathrm{fin}}(X)$ and $\eps_{\mathrm{fin}}$ the classical signature on $\mfS_{\mathrm{fin}}$ taking values in $(\faktor{\ZZ}{2\ZZ},+)$.

\subsection{Definitions}

\begin{Def}
Let $h$ be an element of $ \widehat{\PC^{\bowtie}}$, $n \in \NN$ and $\cP=\lbrace I_1,I_2, \ldots ,I_n \rbrace \in \Pi_h$. For every $1 \leq j \leq n$, let $\alpha_j$ be the left endpoint of $I_j$ and $\beta_j$ be the left endpoint of $h(I_j^{\circ})$. We define the \textit{default of pseudo right continuity for $h$ about $\cP$} denoted $\sigma_{(h,\cP)}$ as the finitely supported permutation which sends $h(\alpha_j)$ to $\beta_j$ for every $1 \leq j \leq n$ (this is well-defined because the set of all $h(\alpha_j)$ is equal to the set of all $\beta_j$).
\end{Def}

\begin{Def}
Let $h$ be an element of $ \widehat{\PC^{\bowtie}}$ and $\cP \in \Pi_h$. Let $k$ be the number of interval of $\cP$ on which $h$ is order-reversing. We called the \textit{flip number of $h$ about $\cP$} the number $k$. We denote it by $R(h,\cP)$.
\end{Def}

\begin{Def}
For $h \in \widehat{\PC^{\bowtie}}$ and $\cP \in \Pi_h$, define $\eps(h,\cP) \in \faktor{\ZZ}{2\ZZ}$ as $R(h,\cP) + \eps_{\mathrm{fin}}(\sigma_{(h,\cP)}) ~[\mathrm{mod}\ 2]$. We define also $\eps(h)=\eps(h,\cP_h^{\mathrm{fin}})$.
\end{Def}

\begin{Prop}
For every $\tau \in \mfS_{\mathrm{fin}}$ and every $\cP \in \Pi_{\tau}$ we have $\eps(\tau,\cP)=\eps_{\mathrm{fin}}(\tau)$.
\end{Prop}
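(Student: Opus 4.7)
The plan is to unpack all three ingredients of $\eps(\tau,\cP)$ for $\tau\in\mfS_{\mathrm{fin}}$ and show that the flip number contributes nothing, while the default of pseudo right continuity recovers $\tau$ itself up to inversion.

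First I would pin down where $\tau$ can fail to be the identity. Since $\tau$ has finite support and is continuous on each $I_j^{\circ}$, and since $\{x\in I_j^{\circ}:\tau(x)=x\}$ is cofinite in $I_j^{\circ}$, continuity forces $\tau$ to be the identity on the whole of $I_j^{\circ}$. Hence the support of $\tau$ is contained in the finite set $A=\{\alpha_1,\dots,\alpha_n\}$ of left endpoints of the intervals of $\cP$, and in particular $\tau(A)=A$. An immediate consequence is that $\tau$ is order-preserving on every interval of $\cP$, so $R(\tau,\cP)=0$.

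Next I would compute $\sigma_{(\tau,\cP)}$ explicitly. Because $\tau$ restricts to the identity on $I_j^{\circ}$, one has $\tau(I_j^{\circ})=I_j^{\circ}$, so the left endpoint $\beta_j$ of $\tau(I_j^{\circ})$ equals $\alpha_j$. By definition $\sigma_{(\tau,\cP)}$ is then the finitely supported permutation that sends $\tau(\alpha_j)$ to $\alpha_j$ for every $j$ and fixes everything else. On $A$ this is exactly the inverse of the permutation $\tau|_A$, and off $A$ both $\sigma_{(\tau,\cP)}$ and $\tau^{-1}$ are the identity; therefore $\sigma_{(\tau,\cP)}=\tau^{-1}$ in $\mfS_{\mathrm{fin}}$.

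Combining the two observations, $\eps(\tau,\cP)=R(\tau,\cP)+\eps_{\mathrm{fin}}(\sigma_{(\tau,\cP)})=0+\eps_{\mathrm{fin}}(\tau^{-1})=\eps_{\mathrm{fin}}(\tau)$, since the classical signature is invariant under inversion. There is essentially no obstacle here: the only point that needs a little care is the initial continuity argument that forces $\tau$ to be identity on each $I_j^{\circ}$ (and hence that the support of $\tau$ lies in $A$), because everything else then follows by direct identification of the two permutations.
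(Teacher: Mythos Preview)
Your proof is correct and follows the same route as the paper, just with the details spelled out where the paper simply writes ``it is clear''. In fact you are slightly more careful than the paper: from the definition, $\sigma_{(\tau,\cP)}$ sends $\tau(\alpha_j)\mapsto\beta_j=\alpha_j$ and so equals $\tau^{-1}$, whereas the paper states $\sigma_{(\tau,\cP)}=\tau$; your final remark that $\eps_{\mathrm{fin}}(\tau^{-1})=\eps_{\mathrm{fin}}(\tau)$ makes the conclusion valid either way.
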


\begin{proof}
It is clear that for every $\tau \in \mfS_{\mathrm{fin}}$ and every partition $\cP$ associated with $\tau$ we have $R(\tau,\cP)=0$ and $\sigma_{(\tau,\cP)}=\tau$.
\end{proof}

We deduce that $\eps$ extends the classical signature $\eps_{\mathrm{fin}}$. Thus we will write $\eps$ instead of $\eps_{\mathrm{fin}}$.

\begin{Prop}\label{Proposition widehat PC^+ rc is in the kernel of epsilon}
Every right-continuous element $f$ of $\widehat{\PC^+}$ satisfies $\eps(f,\cP)=0$ for every $\cP \in \Pi_f$.
\end{Prop}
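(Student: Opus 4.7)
The plan is to observe that the two summands defining $\eps(f,\cP)$ both vanish separately, for essentially independent reasons: the flip number is zero because $f$ is piecewise orientation-preserving, and the default of pseudo right-continuity is the identity permutation because $f$ is right-continuous.

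First I would fix $\cP = \{I_1, \dots, I_n\} \in \Pi_f$ and note that since $f \in \widehat{\PC^+}$, the map $f$ is orientation-preserving on $I_j^\circ$ for each $1 \leq j \leq n$. By the very definition of the flip number, no interval of $\cP$ contributes, so $R(f, \cP) = 0$.

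Next I would show that $\sigma_{(f,\cP)}$ is the identity. Let $\alpha_j$ be the left endpoint of $I_j$ and $\beta_j$ the left endpoint of $f(I_j^\circ)$. Because $f$ is continuous and order-preserving on $I_j^\circ$, the image $f(I_j^\circ)$ is an interval with $\beta_j = \inf f(I_j^\circ) = \lim_{x \to \alpha_j^+} f(x)$. Right-continuity of $f$ at $\alpha_j$ then gives $f(\alpha_j) = \lim_{x \to \alpha_j^+} f(x) = \beta_j$. Hence the permutation $\sigma_{(f,\cP)}$ that sends $f(\alpha_j)$ to $\beta_j$ is the identity on each of the finitely many points in its prescribed support, and therefore equal to $\mathrm{Id}$. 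Consequently $\eps(\sigma_{(f,\cP)}) = 0$.

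Combining the two, $\eps(f, \cP) = R(f, \cP) + \eps(\sigma_{(f,\cP)}) = 0 \pmod{2}$, as desired. I do not anticipate a genuine obstacle: the only subtlety is checking that the infimum of $f(I_j^\circ)$ really coincides with $\beta_j$ (not with a jump value from the left), but this follows immediately from the convention that intervals of $\cP$ and of $f(\cP)$ are right-open and left-closed, so the left endpoint of $f(I_j^\circ)$ is precisely its infimum.
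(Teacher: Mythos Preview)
Your proof is correct and follows exactly the same approach as the paper, which simply observes that $R(f,\cP)=0$ (since $f\in\widehat{\PC^+}$ is order-preserving on each piece) and $\sigma_{(f,\cP)}=\Id$ (since right-continuity forces $f(\alpha_j)=\beta_j$). You have merely written out in detail what the paper states in one line.
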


\begin{proof}
In this case, for every partition $\cP$ into intervals associated with $f$ we always have $R(f,\cP)=0$ and $\sigma_{(f,\cP)}=\Id$.
\end{proof}

\subsection{Proof of Theorem \ref{Theorem Existence of a nontrivial group homomorphism onto Z/2Z}}~\

In order to prove that $\eps$ is a group homomorphism, it is useful to calculate $\eps(h)$ thanks to $\eps(h,\cP)$ for every $h \in \widehat{\PC^{\bowtie}}$ and $\cP \in \Pi_h$.

\begin{Lem}\label{Lemma the signature is independent of the partition}
For every $h \in \widehat{\PC^{\bowtie}}$ and every $\cP \in \Pi_h$ we have $\eps(h)=\eps(h,\cP)$.
\end{Lem}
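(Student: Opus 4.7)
The plan is to reduce the statement to invariance under a single refinement step and then invoke the remark following Definition~\ref{Definition partition associated}, which tells us that every $\cP \in \Pi_h$ refines $\cP_h^{\min}$. Given an interval $I = \mathopen{[}\alpha, \gamma\mathclose{[} \in \cP$ and a point $c \in \mathopen{]}\alpha, \gamma\mathclose{[}$, I would compare $\eps(h, \cP)$ with $\eps(h, \cP')$, where $\cP'$ replaces $I$ by $\mathopen{[}\alpha, c\mathclose{[} \sqcup \mathopen{[}c, \gamma\mathclose{[}$. By induction on the number of added split points this gives invariance under arbitrary refinement, whence $\eps(h, \cP) = \eps(h, \cP_h^{\min}) = \eps(h)$.

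For the single split, separate by the orientation of $h|_{I^\circ}$. Let $\beta$ denote the left endpoint of $h(I^\circ)$. If $h$ is order-preserving on $I^\circ$, both halves remain order-preserving, so $R(h, \cP') = R(h, \cP)$; continuity and monotonicity force the arrival interval of $\mathopen{[}c, \gamma\mathclose{[}$ to have left endpoint $h(c)$, while the arrival interval of $\mathopen{[}\alpha, c\mathclose{[}$ retains $\beta$, so $\sigma_{(h, \cP')}$ extends $\sigma_{(h, \cP)}$ by the fixed point $h(c)$ and the two signatures are equal. If instead $h$ is order-reversing on $I^\circ$, both halves are order-reversing, so $R(h, \cP') = R(h, \cP) + 1$; this time the arrival interval of $\mathopen{[}\alpha, c\mathclose{[}$ has left endpoint $h(c)$, while that of $\mathopen{[}c, \gamma\mathclose{[}$ retains $\beta$. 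Hence where $\sigma_{(h, \cP)}$ sent $h(\alpha) \mapsto \beta$, the permutation $\sigma_{(h, \cP')}$ sends $h(\alpha) \mapsto h(c)$ and $h(c) \mapsto \beta$, so $\sigma_{(h, \cP')} = (h(c)\ \beta) \cdot \sigma_{(h, \cP)}$, changing the signature by $1$. Combined with the change of $1$ in $R$, the parity of $R + \eps(\sigma)$ is preserved.

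The main delicate point is the final signature computation in the order-reversing case: I need to know that $h(c)$ lies outside the support of $\sigma_{(h, \cP)}$, so that multiplying by the transposition $(h(c)\ \beta)$ changes the signature by exactly $1$. This follows from three observations. First, $h|_{I^\circ}$ is a continuous bijection onto its open image, so $h(c)$ lies in the interior of the arrival interval of $I$. Second, the interiors of distinct arrival intervals of $\cP$ are pairwise disjoint, so $h(c) \neq \beta_k$ for every index $k$. Third, bijectivity of $h$ together with $c \in I^\circ$ rules out $h(c) = h(\alpha_k)$ for any left endpoint $\alpha_k$ of an interval of $\cP$. With these checks in place the induction is routine, and the lemma follows.
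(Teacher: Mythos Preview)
Your proof is correct and follows essentially the same approach as the paper's: both reduce to a single elementary refinement via the remark on $\cP_h^{\min}$, split into the order-preserving and order-reversing cases, and in the latter identify the same transposition $(h(c)\ \beta)$ relating the two defaults of pseudo right continuity. Your treatment is slightly more explicit in checking that $h(c)$ lies outside the support of $\sigma_{(h,\cP)}$, but the argument is otherwise identical.
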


\begin{proof}
Let $h$ and $\cP$ be as in the statement. By minimality of $\cP_h^{\min}$, in term of refinement, we deduce that there exist $n \in \NN$ and $\cP_1,\cP_2,\ldots ,\cP_n \in \Pi_h$ such that:
\begin{enumerate}
\item $\cP_1=\cP_h^{\min}$;
\item $\cP_n=\cP$;
\item for every $2 \leq i \leq n$ the partition $\cP_i$ is a refinement of the partition $\cP_{i-1}$ where only one interval of $\cP_{i-1}$ is cut into two.
\end{enumerate}

Hence it is enough to show $\eps(h,\cQ)=\eps(h,\cQ')$ where $\cQ, \cQ' \in \Pi_h$ such that there exist consecutive intervals $I,J \in \cQ$ with $I\cup J \in \cQ'$ and $\cQ' \smallsetminus{\lbrace I \cup J \rbrace}=\cQ \smallsetminus{\lbrace I,J \rbrace}$.

Let $\alpha$ be the left endpoint of $I$ and let $x$ be the right endpoint of $I$ ($x$ is also the left endpoint of $J$). There are only two cases but in both cases, we know that $\sigma_{(h,\cQ)}=\sigma_{(h,\cQ')}$ except maybe on $h(\alpha)$ and $h(x)$:
\begin{enumerate}
\item The first case is when $h$ is order-preserving on $I \cup J$. Then as $\cQ \setm \lbrace I,J \rbrace = \cQ' \setm \lbrace I \cup J \rbrace$ we get $R(h,\cQ)=R(h,\cQ')$. As $h$ is order-preserving on the interior of $I \cup J$ we know that $\sigma_{(h,\cQ')}(h(\alpha))$ is the left endpoint of $h(I\cup J)$ which is the left endpoint of $h(I)$ thus equals to $\sigma_{(h,\cQ)}(h(\alpha))$. With the same reasoning we deduce that $\sigma_{(h,\cQ')}(h(x))=\sigma_{(h,\cQ)}(h(x))$ hence $\sigma_{(h,\cQ)}=\sigma_{(h,\cQ')}$. Thus in $\faktor{\ZZ}{2\ZZ}$ we have $R(h,\cQ') + \eps(\sigma_{(h,\cQ')})=R(h,\cQ) + \eps(\sigma_{(h,\cQ)})$.
\item The second case is when $h$ is order-reversing on $I \cup J$. Then we get $R(h,\cQ)=R(h,\cQ')+1$. This time $\sigma_{(h,\cQ')}(h(\alpha))$ is still the left endpoint of $h(I \cup J)$ which is the left endpoint of $h(J)$ thus equals to $\sigma_{(h,\cQ)}(h(x))$. With the same reasoning we deduce that $\sigma_{(h,\cQ')}(h(x))=\sigma_{(h,\cQ)}(h(\alpha))$. Then by denoting $\tau$ the transposition $(h(x) ~ \sigma_{(h,\cQ')}(h(\alpha)))$, we obtain $\sigma_{(h,\cQ)}=\tau \circ \sigma_{(h,\cQ')}$. We must notice that the transposition is not the identity because $h^{-1}(\sigma_{(h,\cQ')}(h(\alpha)))$ is an endpoint of one of the intervals of $\cQ'$ and $x$ is not.\\
In conclusion in $\faktor{\ZZ}{2\ZZ}$ we have:
\[ R(h,\cQ') + \eps(\sigma_{(h,\cQ')})=R(h,\cQ') +1 +1+ \eps(\sigma_{(h,\cQ')})=R(h,\cQ) + \eps(\sigma_{(h,\cQ)}) \]
\end{enumerate}

\begin{center}
\includegraphics[scale=0.60]{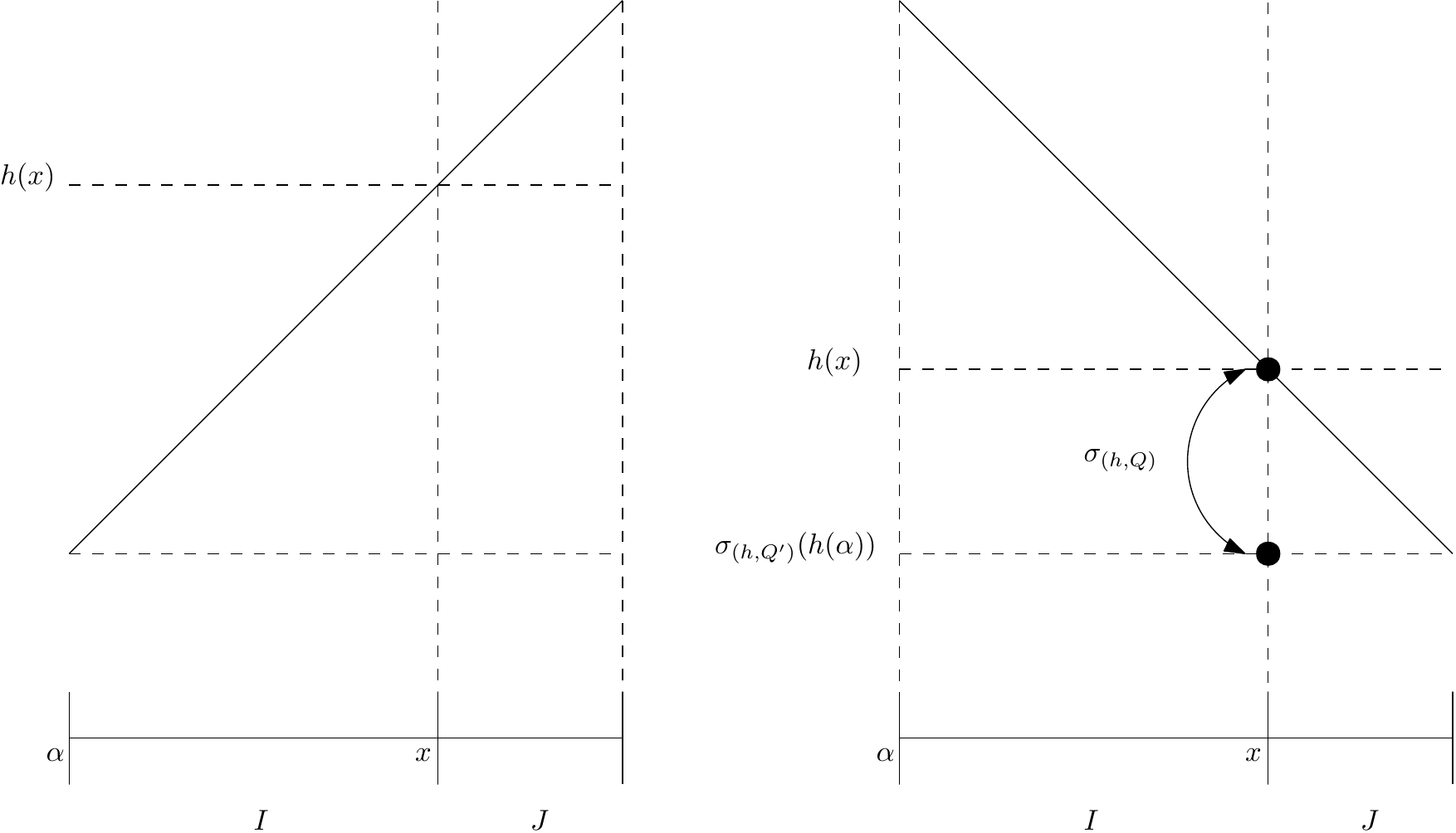}
\captionof{figure}{\footnotesize{Illustrations of the two cases appearing in Lemma \ref{Lemma the signature is independent of the partition}.\\ On the left we assume $h$ order-preserving on $I \cup J$ and see that $\sigma_{(h,\cQ)} (h(x))= \sigma_{(h,\cQ')}(h(x))$. On the right we assume $h$ order-reversing on $I \cup J$ and see that $\sigma_{(h,\cQ)} (h(x)) = (h(x) ~ \sigma_{(h,\cQ')}(h(\alpha)) ) \circ \sigma_{(h,\cQ')}(h(x))$.}}
\end{center}

\end{proof}

If $\phi \in \Homeo^+(\mathopen{[}0,1 \mathclose{[})$, then it follows from Proposition \ref{Proposition widehat PC^+ rc is in the kernel of epsilon} that $\eps(\phi)=0$. We improve this, showing that $\eps$ is invariant by the action of $\Homeo^+(\mathopen{[} 0,1 \mathclose{[})$ on $\widehat{\PC^{\bowtie}}$. 

\begin{Lem}
For every $h \in \widehat{\PC^{\bowtie}}$ and every $\phi \in \Homeo^+(\mathopen{[} 0,1 \mathclose{[})$ we have $\eps(h \phi)=\eps(h)=\eps(\phi h)$.
\end{Lem}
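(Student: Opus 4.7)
The plan is to exploit Lemma \ref{Lemma the signature is independent of the partition}, which says we may compute $\eps$ with respect to any admissible partition. I will handle the two equalities separately, using the fact that an order-preserving homeomorphism of $\mathopen{[}0,1\mathclose{[}$ is continuous everywhere, preserves left endpoints of half-open intervals, and preserves the local orientation of $h$.

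For $\eps(h\phi)=\eps(h)$, pick any $\cP=\{I_1,\dots,I_n\}\in\Pi_h$ and set $\cP'=\phi^{-1}(\cP)=\{\phi^{-1}(I_1),\dots,\phi^{-1}(I_n)\}$. Since $\phi$ is a homeomorphism, $\cP'\in\Pi_{h\phi}$, and since $\phi$ is order-preserving on each interval, $h\phi$ reverses orientation on $\phi^{-1}(I_j)$ exactly when $h$ does on $I_j$; hence $R(h\phi,\cP')=R(h,\cP)$. Moreover, if $\alpha_j$ is the left endpoint of $I_j$, then $\phi^{-1}(\alpha_j)$ is the left endpoint of $\phi^{-1}(I_j)$, the image $(h\phi)(\phi^{-1}(I_j)^\circ)=h(I_j^\circ)$ has the same left endpoint $\beta_j$ as in the $h$-case, and $(h\phi)(\phi^{-1}(\alpha_j))=h(\alpha_j)$. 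Therefore $\sigma_{(h\phi,\cP')}=\sigma_{(h,\cP)}$, so $\eps(h\phi,\cP')=\eps(h,\cP)$, and Lemma \ref{Lemma the signature is independent of the partition} finishes this case.

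For $\eps(\phi h)=\eps(h)$, pick $\cP\in\Pi_h$; since $\phi$ is continuous everywhere, $\cP\in\Pi_{\phi h}$ as well. Again $\phi$ being order-preserving gives $R(\phi h,\cP)=R(h,\cP)$. With the same notation $\alpha_j,\beta_j$ as before, the left endpoint of $(\phi h)(I_j^\circ)=\phi(h(I_j^\circ))$ is $\phi(\beta_j)$, and $(\phi h)(\alpha_j)=\phi(h(\alpha_j))$. Hence $\sigma_{(\phi h,\cP)}(\phi(h(\alpha_j)))=\phi(\beta_j)=\phi\bigl(\sigma_{(h,\cP)}(h(\alpha_j))\bigr)$, which means $\sigma_{(\phi h,\cP)}=\phi\,\sigma_{(h,\cP)}\,\phi^{-1}$ on its finite support $\phi(\mathrm{supp}\,\sigma_{(h,\cP)})$.

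The last observation is the only mildly subtle point: although $\phi$ is not in $\mfS_{\mathrm{fin}}$, conjugation by any bijection of $\mathopen{[}0,1\mathclose{[}$ sends a finitely supported permutation to another finitely supported permutation with the same cycle structure, hence the same signature. Thus $\eps(\sigma_{(\phi h,\cP)})=\eps(\sigma_{(h,\cP)})$, giving $\eps(\phi h,\cP)=\eps(h,\cP)$, and one more appeal to Lemma \ref{Lemma the signature is independent of the partition} concludes. I do not anticipate any serious obstacle; the only thing to be careful about is that the ``partition'' $\phi^{-1}(\cP)$ and the image partitions have the correct half-open shape, which is automatic because $\phi$ is an order-preserving homeomorphism of a half-open interval onto itself.
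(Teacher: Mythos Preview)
Your proof is correct and follows essentially the same approach as the paper: for $\eps(h\phi)=\eps(h)$ both you and the paper pull back the partition via $\phi^{-1}$ and observe that $R$ and $\sigma$ are literally unchanged, and for $\eps(\phi h)=\eps(h)$ both use the same partition $\cP$ and identify $\sigma_{(\phi h,\cP)}=\phi\,\sigma_{(h,\cP)}\,\phi^{-1}$. Your remark that conjugation by an arbitrary bijection preserves cycle type (hence signature) is a nice explicit justification of a step the paper leaves implicit.
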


\begin{proof}
Let $h \in \widehat{\PC^{\bowtie}}$ and $\phi \in \Homeo^+(\mathopen{[} 0,1 \mathclose{[})$ be as in the statement. Let $n \in \NN$ and $\cP:= \lbrace I_1,I_2, \ldots ,I_n \rbrace \in \Pi_h$. Then $\cQ:=\lbrace \phi^{-1}(I_1),\phi^{-1}(I_2), \ldots, \phi^{-1}(I_n) \rbrace$ is in $\Pi_{h\phi}$. We know that $\phi$ is order preserving then for every $1 ,\leq i \leq n$, $h \phi$ preserves (reverses respectively) the order on $\phi^{-1}(I_i)$ if and only if $h$ preserves (reverses respectively) the order on $I_i$, so $R(h,\cP)=R(h\phi,\cQ)$. We can notice that the left endpoint of $\phi^{-1}(I_i)$ (denoted by $\alpha_i$) is send on the left endpoint of $I_i$ (denoted by $a_i$) by $\phi$ hence $h(a_i)=h\phi(\alpha_i)$ has to be send on $\sigma_{(h,\cP)}(h(a_i))$ so $\sigma_{(h\phi,\cQ)}=\sigma_{(h,\cP)}$. we deduce that $\eps(h\phi)= \eps(h)$.\\
The other equality has a similar proof. We denote $h(\cP)$ the arrival partition of $h$ associated with $\cP$. We know that $\phi$ is continuous thus $h(\cP)$ is in $\Pi_{\phi}$ and we deduce that $\cP \in \Pi_{\phi h}$. Also $\phi$ is order-preserving then $R(h,\cP)=R(\phi h, \cP))$. We know that $\sigma_{(\phi,h(\cP))}=\Id$ then we can notice that $\phi \circ \sigma_{(h,\cP)} \circ h$ send the left endpoint of $I_i$ to the left endpoint of $\phi h (I_i^{\circ})$. Then $\sigma_{(\phi h, \cP)}=\phi \sigma_{(h,\cP)} \phi^{-1}$ and we deduce that $\eps(\sigma_{(\phi h, \cP)})=\eps(\sigma_{(h,\cP)})$. Hence $\eps(\phi h)=\eps(h)$.
\end{proof}

Thanks to Proposition \ref{Proposition Decomposition in widehat(PC^bowtie)} it is enough to prove that $\eps |_{\widehat{\IET^{\bowtie}}}$ is a group homomorphism.

\begin{Lem}
The map $\eps |_{\widehat{\IET^{\bowtie}}}$ is a group homomorphism.
\end{Lem}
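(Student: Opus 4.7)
The plan is to prove multiplicativity directly by evaluating $\eps$ of $g$, $h$, and $gh$ via a single partition compatible with both factors, then splitting $\eps$ into its two constituent terms. Given $g, h \in \widehat{\IET^{\bowtie}}$, I would choose a finite partition $\cP \in \Pi_h$ such that the arrival partition $h(\cP)$ lies in $\Pi_g$: for instance, take the common refinement of $\cP_h^{\min}$ with the $h$-pullback of $\cP_g^{\min}$, which is well-defined because a piecewise-isometric bijection maps its pieces to intervals. With such a $\cP$, one automatically has $\cP \in \Pi_{gh}$ with arrival partition $g(h(\cP))$, and by Lemma \ref{Lemma the signature is independent of the partition} it suffices to prove
\[ \eps(gh, \cP) \equiv \eps(g, h(\cP)) + \eps(h, \cP) \pmod{2}. \]

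Next I would split each $\eps(\cdot, \cdot)$ into flip-number plus permutation signature. For the flip-number part, observe that $gh$ reverses orientation on $I \in \cP$ if and only if exactly one of $h|_I$ and $g|_{h(I^\circ)}$ does, which gives $R(gh, \cP) \equiv R(h, \cP) + R(g, h(\cP)) \pmod{2}$. It remains to show $\eps(\sigma_{(gh, \cP)}) \equiv \eps(\sigma_{(g, h(\cP))}) + \eps(\sigma_{(h, \cP)}) \pmod{2}$. The plan is to establish the stronger equality of permutations
\[ \sigma_{(gh, \cP)} = \sigma_{(g, h(\cP))} \cdot g \, \sigma_{(h, \cP)} \, g^{-1}, \]
whence the signature identity follows immediately because the classical signature on $\mfS_{\mathrm{fin}}$ is a homomorphism and is invariant under conjugation by any bijection of $\mathopen{[}0, 1\mathclose{[}$ (conjugation preserves cycle type).

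To verify this permutation identity, write $\cP = \{I_1, \ldots, I_n\}$ and let $\alpha_i$, $\beta_i$, $\delta_i$ denote the left endpoints of $I_i$, of the corresponding interval in $h(\cP)$, and of the corresponding interval in $g(h(\cP))$, respectively. Because $h$ is a bijection of $\mathopen{[}0,1\mathclose{[}$ carrying $\bigcup_i I_i^{\circ}$ onto $\bigcup_i h(I_i^{\circ})$, it sends $\{\alpha_i\}$ bijectively to $\{\beta_i\}$, so there exists $\pi \in \mfS_n$ with $h(\alpha_i) = \beta_{\pi(i)}$. This forces the three sets $\{gh(\alpha_i)\}$, $\{g(\beta_j)\}$ and $\{\delta_j\}$ to coincide. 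On a point $gh(\alpha_i) = g(\beta_{\pi(i)})$ in this common set, both sides of the identity evaluate to $\delta_i$ (using $\sigma_{(h, \cP)}(\beta_{\pi(i)}) = \beta_i$); on a point outside this set, both sides act as the identity. The main obstacle is precisely this bookkeeping: one must carefully exploit the set equalities forced by Definition \ref{Definition partition associated} and by the bijectivity of $g$ and $h$ to confirm that the right-hand composition does not spuriously move points that the single permutation on the left fixes.
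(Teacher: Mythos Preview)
Your proposal is correct and follows essentially the same approach as the paper: choose a compatible partition so that the arrival partition of the first factor serves for the second, split $\eps$ into flip-number plus permutation signature, and establish the identity $\sigma_{(gh,\cP)} = \sigma_{(g,h(\cP))} \circ g\,\sigma_{(h,\cP)}\,g^{-1}$. Your verification of this permutation identity is in fact more carefully spelled out than the paper's, including the check that both sides fix points outside the common endpoint set.
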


\begin{proof}
Let $f,g \in \widehat{\IET^{\bowtie}}$. Let $\cP \in \Pi_f$ and $\cQ \in \Pi_g$. For every $I \in \cQ$ (resp.\ $J \in \cP$) we denote by $\alpha_I$ (resp.\ $\beta_J$) the left endpoint of $I$ (resp.\ $J$).  Up to refine $\cP$ and $\cQ$ we can assume that $\cP=g(\cQ)$ thus $g(\lbrace \alpha_I \rbrace_{I \in \cQ})=\lbrace \beta_J \rbrace_{J \in \cP}$. Then $Q \in \Pi_{f \circ g}$ and for every $K \in f \circ g(Q)$ we denote by $\gamma_K$ the left endpoint of $K$.\\
In $\faktor{\ZZ}{2\ZZ}$, we get immediately that $R(f \circ g, Q)=R(g,Q) + R(f,g(Q))$. Now we want to describe the default of pseudo right continuity for $f \circ g$ about $\cQ$. We recall that $\sigma_{(f \circ g,\cQ)}$ is the permutation that sends $f \circ g (\alpha_I)$ on $\gamma_{f \circ g (I)}$ for every $I \in \cQ$ while fixing the rest of $\mathopen{[}0,1 \mathclose{[}$. Furthermore $\sigma_{(g,\cQ)}(g(\alpha_I))=\beta_{g(I)}$ and $\sigma_{(f,g(\cQ))}(f(\beta_{g(I)}))=\gamma_{f \circ g(I)}$. Then $\sigma_{(f,g(\cQ))} \circ f \circ \sigma_{(g,\cQ)} \circ g (\alpha_I)=\gamma_{f \circ g(I)}$ and we deduce that the permutation $\sigma_{(f,g(\cQ))} \circ f \circ \sigma_{(g,\cQ)} \circ f^{-1}$ sends $f \circ g (\alpha_I)$ on $\gamma_{f \circ g (I)}$ for every $I \in \cQ$ while fixing the rest of $\mathopen{[}0 , 1 \mathclose{[}$. Thus $\sigma_{(f\circ g, \cQ)}=\sigma_{f,g(\cQ)} \circ f \circ \sigma_{(g,\cQ)} \circ f^{-1}$. Then $\eps(\sigma_{(f\circ g, \cQ)})=\eps(\sigma_{f,g(\cQ)}) + \eps(\sigma_{(g,\cQ)})$ and we conclude that $\eps(f \circ g)= \eps(f) + \eps(g)$.
\end{proof}

\begin{Coro}
The map $\eps$ is a group homomorphism. \qed
\end{Coro}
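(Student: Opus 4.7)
The proof of the corollary is essentially a bookkeeping exercise combining the three preceding results, and it is the last step where Proposition \ref{Proposition Decomposition in widehat(PC^bowtie)} finally pays off. The plan is to reduce multiplicativity on all of $\widehat{\PC^{\bowtie}}$ to multiplicativity on $\widehat{\IET^{\bowtie}}$, using the fact that $\Homeo^+(\mathopen{[}0,1\mathclose{[})$-factors can be stripped on either side without affecting $\eps$.

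Concretely, fix $h_1, h_2 \in \widehat{\PC^{\bowtie}}$. By Proposition \ref{Proposition Decomposition in widehat(PC^bowtie)}, write $h_1 = \psi_1 \circ f_1$ and $h_2 = g_2 \circ \phi_2$, where $\psi_1, \phi_2 \in \Homeo^+(\mathopen{[}0,1\mathclose{[})$ and $f_1, g_2 \in \widehat{\IET^{\bowtie}}$. By the invariance lemma (the one immediately before the $\widehat{\IET^{\bowtie}}$ multiplicativity lemma), we have $\eps(h_1) = \eps(f_1)$ and $\eps(h_2) = \eps(g_2)$. For the product, compute
\[
h_1 h_2 \;=\; \psi_1 \circ f_1 \circ g_2 \circ \phi_2,
\]
and apply the same invariance lemma twice (once to remove $\psi_1$ on the left, once to remove $\phi_2$ on the right) to obtain $\eps(h_1 h_2) = \eps(f_1 \circ g_2)$. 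Since $f_1 \circ g_2 \in \widehat{\IET^{\bowtie}}$, the previous lemma gives $\eps(f_1 \circ g_2) = \eps(f_1) + \eps(g_2)$. Chaining these equalities yields $\eps(h_1 h_2) = \eps(h_1) + \eps(h_2)$, as required.

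There is no genuine obstacle here: all the nontrivial content is already packaged in (i) the well-definedness of $\eps$ independently of the chosen partition (Lemma \ref{Lemma the signature is independent of the partition}), (ii) the $\Homeo^+$-bi-invariance, and (iii) multiplicativity on $\widehat{\IET^{\bowtie}}$. The only thing to double-check is that invariance can be applied \emph{on both sides simultaneously}, i.e.\ that the identity $\eps(\psi_1 k \phi_2) = \eps(k)$ for $k \in \widehat{\PC^{\bowtie}}$ indeed follows from two applications of the one-sided invariance — which it does, since $k\phi_2 \in \widehat{\PC^{\bowtie}}$ and then $\psi_1(k\phi_2)$ is a left-multiplication by an element of $\Homeo^+(\mathopen{[}0,1\mathclose{[})$.
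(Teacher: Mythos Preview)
Your argument is correct and is exactly the reduction the paper has in mind: the paper states, right after the $\Homeo^+$-invariance lemma, that by Proposition~\ref{Proposition Decomposition in widehat(PC^bowtie)} it suffices to show $\eps|_{\widehat{\IET^{\bowtie}}}$ is a homomorphism, and then marks the corollary with \qed. You have simply spelled out that reduction explicitly, using the two decomposition forms $h_1=\psi_1 f_1$ and $h_2=g_2\phi_2$ so that the $\Homeo^+$-factors can be stripped on the outside of the product.
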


\section{Normal subgroups of $\widehat{\PC^{\bowtie}}$ and some subgroups}\label{Section Normal subgroups of widehat PC^ bowtie and some subgroups}

Here we present some corollaries of Theorem \ref{Theorem Existence of a nontrivial group homomorphism onto Z/2Z}. For every group $G$ we denote by $D(G)$ its derived subgroup.

\begin{Def}
For every group $H$, we define $J_3(H)$ as the subgroup generated by elements of order 3.
\end{Def}

Let $\widehat{G}$ be a subgroup of $\widehat{\PC^{\bowtie}}$ containing $\mfS_{\mathrm{fin}}$. We denote by $G$ its projection on $\PC^{\bowtie}$. We recall that $\mfA_{\mathrm{fin}}$ is a normal subgroup of $\widehat{G}$, and has a trivial centraliser. We deduce that for every nontrivial normal subgroup $H$ of $\widehat{G}$ contains $\mfA_{\mathrm{fin}}$.

From the short exact sequence:
$$1 \longrightarrow \mfS_{\mathrm{fin}} \longrightarrow \widehat{G} \longrightarrow G \longrightarrow 1
$$
we deduce the next short exact sequence which is a central extension:
$$
1 \longrightarrow \faktor{\ZZ}{2 \ZZ} \longrightarrow \faktor{\widehat{G}}{\mfA_{\mathrm{fin}}} \longrightarrow G \longrightarrow 1.
$$

This short exact sequence splits because the signature $\eps_{|\widehat{G}}: \widehat{G} \rightarrow \faktor{\ZZ}{2\ZZ}$ constructed in \S~\ref{Section The signature homomorphism} is a retraction. Then we deduce that $\faktor{\widehat{G}}{\mfA_{\mathrm{fin}}}$ is isomorphic to the direct product $\faktor{\ZZ}{2\ZZ} \times G$.

\begin{Coro}
The projection $\widehat{G}_{\mathrm{ab}} \rightarrow G_{\mathrm{ab}}$ extends in an isomorphism $\widehat{G}_{\mathrm{ab}} \sim G_{\mathrm{ab}} \times \faktor{\ZZ}{2\ZZ}$. Furthermore $D(\widehat{G})=\Ker(\eps) \cap \widehat{D(G)}$ is a subgroup of index $2$ in $\widehat{D(G)}$. In particular, if $G$ is a perfect group then $\widehat{G}_{\mathrm{ab}}= \faktor{\ZZ}{2\ZZ}$.
\end{Coro}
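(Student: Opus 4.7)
The plan is to leverage the splitting that has just been established, namely $\faktor{\widehat{G}}{\mfA_{\mathrm{fin}}} \simeq G \times \faktor{\ZZ}{2\ZZ}$, and transfer every statement to this direct product, where the computations are immediate.

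First I would record the key fact that $\mfA_{\mathrm{fin}}$ is perfect. Since $\mfA_{\mathrm{fin}}$ is the alternating group on the infinite set $[0,1[$, it is simple and nonabelian, hence equal to its own derived subgroup. In particular $\mfA_{\mathrm{fin}} = D(\mfA_{\mathrm{fin}}) \subseteq D(\widehat{G})$, so quotienting by $\mfA_{\mathrm{fin}}$ is harmless when computing abelianizations or derived subgroups of $\widehat{G}$.

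Next I would identify three subgroups of $\widehat{G}$ after passing to the quotient $\faktor{\widehat{G}}{\mfA_{\mathrm{fin}}} \simeq G \times \faktor{\ZZ}{2\ZZ}$. Under the splitting produced by $\eps$, the restriction of $\eps$ to $\widehat{G}$ corresponds to the projection onto the second factor, because $\eps$ kills $\mfA_{\mathrm{fin}}$ and is a retraction of the central extension; consequently $\Ker(\eps|_{\widehat{G}})/\mfA_{\mathrm{fin}}$ corresponds to $G \times \{0\}$. Since $\widehat{D(G)}$ is by definition the full preimage of $D(G)$ in $\widehat{G}$ and contains $\mfS_{\mathrm{fin}}$, its image in the quotient is $D(G) \times \faktor{\ZZ}{2\ZZ}$. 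Finally, $D(\widehat{G})/\mfA_{\mathrm{fin}} = D(\widehat{G}/\mfA_{\mathrm{fin}}) = D(G \times \faktor{\ZZ}{2\ZZ}) = D(G) \times \{0\}$.

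From these three identifications the corollary follows mechanically. The abelianization $\widehat{G}_{\mathrm{ab}} = (\widehat{G}/\mfA_{\mathrm{fin}})/(D(\widehat{G})/\mfA_{\mathrm{fin}}) = (G \times \faktor{\ZZ}{2\ZZ})/(D(G) \times \{0\}) = G_{\mathrm{ab}} \times \faktor{\ZZ}{2\ZZ}$, and one checks that the canonical projection to $G_{\mathrm{ab}}$ is the first coordinate. The intersection $\Ker(\eps) \cap \widehat{D(G)}$ corresponds to $(G \times \{0\}) \cap (D(G) \times \faktor{\ZZ}{2\ZZ}) = D(G) \times \{0\}$, which is exactly $D(\widehat{G})/\mfA_{\mathrm{fin}}$; thus $D(\widehat{G}) = \Ker(\eps) \cap \widehat{D(G)}$, and since $\widehat{D(G)}/D(\widehat{G})$ corresponds to $\{0\} \times \faktor{\ZZ}{2\ZZ}$ the index is $2$. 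The final assertion is immediate: if $G$ is perfect then $G_{\mathrm{ab}}$ is trivial.

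The only genuine input beyond bookkeeping is the perfectness of $\mfA_{\mathrm{fin}}$, which I expect to be the one substantive ingredient; once that is in hand, the rest is a translation exercise via the splitting, so I do not anticipate any real obstacle.
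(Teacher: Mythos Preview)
Your proposal is correct and follows exactly the approach the paper intends: the corollary is stated without proof, immediately after the observation that the central extension splits via $\eps$ and hence $\faktor{\widehat{G}}{\mfA_{\mathrm{fin}}} \simeq G \times \faktor{\ZZ}{2\ZZ}$. Your write-up simply makes explicit the bookkeeping the paper leaves to the reader, including the key point that $\mfA_{\mathrm{fin}}$ is perfect so that derived subgroups and abelianizations may be computed after passing to the quotient.
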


\begin{Coro}
Let $\widehat{G}$ be a subgroup of $\widehat{\PC^{\bowtie}}$ containing $\mfS_{\mathrm{fin}}$ and such that its projection $G$ in $\PC^{\bowtie}$ is simple nonabelian. Then $\widehat{G}$ has exactly $5$ normal subgroups given by the list: $\lbrace \lbrace 1 \rbrace , \mfA_{\mathrm{fin}}, \mfS_{\mathrm{fin}} , \Ker(\eps) , \widehat{G} \rbrace$.
\end{Coro}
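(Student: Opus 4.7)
The plan is to reduce the problem, via the splitting of the central extension afforded by the signature $\eps$, to a purely group-theoretic classification of normal subgroups of the direct product $G \times \faktor{\ZZ}{2\ZZ}$. The key structural input, namely the isomorphism $\widehat{G}/\mfA_{\mathrm{fin}} \cong G \times \faktor{\ZZ}{2\ZZ}$, is already established by the preceding corollary, so the task is essentially bookkeeping once that decomposition is available.

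First I would recall the observation made just before the statement: $\mfA_{\mathrm{fin}}$ is normal in $\widehat{G}$ with trivial centraliser. For any nontrivial normal subgroup $N \trianglelefteq \widehat{G}$, pick $n \in N \setm \{1\}$; then $n$ fails to centralise $\mfA_{\mathrm{fin}}$, so $[N,\mfA_{\mathrm{fin}}] \subseteq N \cap \mfA_{\mathrm{fin}}$ is nontrivial, and by simplicity of $\mfA_{\mathrm{fin}}$ we conclude $\mfA_{\mathrm{fin}} \subseteq N$. Consequently the nontrivial normal subgroups of $\widehat{G}$ are precisely the preimages of the normal subgroups of $\widehat{G}/\mfA_{\mathrm{fin}} \cong G \times \faktor{\ZZ}{2\ZZ}$.

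Next I would classify the normal subgroups $M$ of $G \times \faktor{\ZZ}{2\ZZ}$ using the hypothesis that $G$ is simple nonabelian. Set $M_0 := M \cap (G \times \{0\})$, which is normal in $G$. If $M_0 = G$, then $M$ contains the index-$2$ subgroup $G \times \{0\}$, so $M$ is either $G \times \{0\}$ or the whole direct product. If $M_0 = \{1\}$, then $[M, G \times \{0\}] \subseteq M_0 = \{1\}$, so $M$ lies in the centraliser of $G \times \{0\}$, which equals $Z(G) \times \faktor{\ZZ}{2\ZZ} = \{1\} \times \faktor{\ZZ}{2\ZZ}$ because $G$ is centreless; hence $M$ is $\{1\}$ or $\{1\} \times \faktor{\ZZ}{2\ZZ}$. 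This yields exactly four normal subgroups of $G \times \faktor{\ZZ}{2\ZZ}$.

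Finally I would identify the preimages in $\widehat{G}$: the trivial subgroup lifts to $\mfA_{\mathrm{fin}}$; the factor $G \times \{0\}$, being the kernel of the projection to $\faktor{\ZZ}{2\ZZ}$, corresponds to $\Ker(\eps)$; the factor $\{1\} \times \faktor{\ZZ}{2\ZZ}$, being the kernel of the projection to $G$, corresponds to $\mfS_{\mathrm{fin}}$; and the whole group lifts to $\widehat{G}$. Adjoining the trivial subgroup gives the five subgroups in the statement. The only nonroutine step is the initial centraliser argument forcing $\mfA_{\mathrm{fin}} \subseteq N$; everything afterwards is a direct consequence of the direct product decomposition already proved.
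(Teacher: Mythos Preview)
Your proof is correct and follows essentially the same route as the paper: reduce to the quotient $\widehat{G}/\mfA_{\mathrm{fin}}\cong G\times\faktor{\ZZ}{2\ZZ}$ via the centraliser argument, then classify the normal subgroups of this direct product and pull back. The paper states the classification of normal subgroups of $\faktor{\ZZ}{2\ZZ}\times G$ without details and separately checks distinctness of the five listed subgroups (notably that $\Ker(\eps)\supsetneq\mfS_{\mathrm{fin}}$), whereas you supply the explicit classification argument using $Z(G)=\{1\}$, from which distinctness follows automatically since $G$ is nontrivial.
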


\begin{proof}
Let $\widehat{G}$ as in the statement. First we immediately check that the subgroups in the list are distinct normal subgroups of $\widehat{G}$. In the case of $\Ker(\eps)$, there exists $g \in \widehat{G} \setm \mfS_{\mathrm{fin}}$ thus either $g \in \Ker(\eps)\setm \mfS_{\mathrm{fin}}$ or $\sigma g \in \Ker(\eps) \setm \mfS_{\mathrm{fin}}$ for any transposition $\sigma$.\\
Second let $H$ be a normal subgroup of $\widehat{G}$ distinct from $\lbrace 1 \rbrace$. Then it contains $\mfA_{\mathrm{fin}}$. Also $\faktor{H}{\mfA_{\mathrm{fin}}}$ is a normal subgroup of $\faktor{\widehat{G}}{\mfA_{\mathrm{fin}}} \simeq \faktor{\ZZ}{2\ZZ} \times G$. Furthermore $G$ is simple then there are only four possibilities for $\faktor{H}{\mfA_{\mathrm{fin}}}$. As two normal subgroups $H,K$ of $\widehat{G}$ containing $\mfA_{\mathrm{fin}}$ such that $\faktor{H}{\mfA_{\mathrm{fin}}}=\faktor{K}{\mfA_{\mathrm{fin}}}$ are equal, we deduce that $\widehat{G}$ has at most $5$ normal subgroups.
\end{proof}

\begin{Coro}\label{Corollary When D(G)=Ker(epsilon)?}
Let $\widehat{G}$ be a subgroup of $\widehat{\PC^{\bowtie}}$ containing $\mfS_{\mathrm{fin}}$ and such that its projection $G$ in $\PC^{\bowtie}$ is simple nonabelian. If there exists an element of order $3$ in $G \setm \mfA_{\mathrm{fin}}$ then $J_3(\widehat{G})=\Ker(\eps)=D(\widehat{G})$. \qed
\end{Coro}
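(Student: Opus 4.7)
My plan is to combine the two inclusions $\mfA_{\mathrm{fin}}\subseteq J_3(\widehat G)\subseteq \Ker(\eps)$ with the classification of normal subgroups from the preceding corollary (Theorem \ref{Theorem Normal subgroups of some subgroups of widehat PC bowtie}), and derive $D(\widehat G)=\Ker(\eps)$ directly from the corollary that precedes this one.

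First I would handle the derived subgroup. By the preceding corollary, $D(\widehat G)=\Ker(\eps)\cap \widehat{D(G)}$. Since $G$ is simple and nonabelian it is perfect, so $D(G)=G$ and therefore $\widehat{D(G)}=\widehat G$. This immediately yields $D(\widehat G)=\Ker(\eps)$, disposing of one of the two equalities.

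Next I would analyse $J_3(\widehat G)$. Being generated by a conjugation-invariant set of elements of $\widehat G$, $J_3(\widehat G)$ is a characteristic, hence normal, subgroup. Every $3$-cycle in $\mfS_{\mathrm{fin}}$ is an element of order $3$, and such $3$-cycles generate $\mfA_{\mathrm{fin}}$; this gives $\mfA_{\mathrm{fin}}\subseteq J_3(\widehat G)$. On the other hand, if $h\in\widehat G$ has order $3$ then $\eps(h)\in\ZZ/2\ZZ$ also satisfies $3\eps(h)=0$, so $\eps(h)=0$; consequently $J_3(\widehat G)\subseteq \Ker(\eps)$.

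Now I invoke Theorem \ref{Theorem Normal subgroups of some subgroups of widehat PC bowtie}: the only normal subgroups of $\widehat G$ are $\{1\},\mfA_{\mathrm{fin}},\mfS_{\mathrm{fin}},\Ker(\eps),\widehat G$. Among those, the ones sandwiched between $\mfA_{\mathrm{fin}}$ and $\Ker(\eps)$ are just $\mfA_{\mathrm{fin}}$ and $\Ker(\eps)$, since $\mfS_{\mathrm{fin}}\not\subseteq\Ker(\eps)$ (any transposition witnesses this). Therefore $J_3(\widehat G)\in\{\mfA_{\mathrm{fin}},\Ker(\eps)\}$. The hypothesis then finishes the job: it provides an element of order $3$ lying outside $\mfA_{\mathrm{fin}}$ (note that any order-$3$ element of $\mfS_{\mathrm{fin}}$ is automatically a product of $3$-cycles, hence already in $\mfA_{\mathrm{fin}}$, so the hypothesis is really an assertion about elements outside $\mfS_{\mathrm{fin}}$). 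This element belongs to $J_3(\widehat G)\setm \mfA_{\mathrm{fin}}$, ruling out the possibility $J_3(\widehat G)=\mfA_{\mathrm{fin}}$ and forcing $J_3(\widehat G)=\Ker(\eps)$. Combining with the first step gives $J_3(\widehat G)=\Ker(\eps)=D(\widehat G)$.

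The main obstacle is really only notational: making sure the somewhat loose hypothesis ``an element of order $3$ in $G\setm\mfA_{\mathrm{fin}}$'' is read as producing an honest order-$3$ element of $\widehat G$ not lying in $\mfA_{\mathrm{fin}}$; once that is pinned down, the argument reduces to bookkeeping against the list of five normal subgroups.
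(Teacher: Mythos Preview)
Your argument is correct and is exactly the kind of routine deduction the paper intends: the statement is marked with an immediate \qed\ and no proof is given, so the author is relying on precisely the combination you spell out, namely the identity $D(\widehat G)=\Ker(\eps)\cap\widehat{D(G)}$ together with the five-term list of normal subgroups. Your observation that $J_3(\widehat G)$ is normal, contains $\mfA_{\mathrm{fin}}$ via $3$-cycles, and sits inside $\Ker(\eps)$ because $3$ is odd is the natural way to pin it down on that list, and you are right to flag that the hypothesis ``an element of order $3$ in $G\setm\mfA_{\mathrm{fin}}$'' should be read as an element of order $3$ in $\widehat G\setm\mfA_{\mathrm{fin}}$ (the subsequent Remark, which verifies the hypothesis by looking inside $\widehat{\IET^+}\subset\widehat G$, confirms this reading).
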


\begin{Rem}
In the context of topological-full groups, the group $J_3(G)$ appears naturally (with some mild assumptions) and is denoted by $\mathsf{A}(G)$ by Nekrashevych in \cite{nekrashevych_2019}. In some case of topological-full groups of minimal groupoids (see \cite{matui2015topological}) we have the equality $A(G)=D(G)$ thanks to the simplicity of $D(G)$. In spite of the analogy, it is not clear that the corollary can be obtained as particular case of this result.
\end{Rem}

\begin{Rem}
A lot of groups satisfy the conditions of Corollary \ref{Corollary When D(G)=Ker(epsilon)?}. When $\widehat{G}$ contains $\widehat{\IET^+}$ there is an element of order $3$ in $G \setm \mfA_{\mathrm{fin}}$. We recall that $\IET^{\bowtie}$, $\PC^+$ and $\PAff^+$ are simple (see \cite{ArnouxThese, GuelmanLiousse2019}). Thus these groups satisfy the conditions of Corollary \ref{Corollary When D(G)=Ker(epsilon)?}. The next theorem add $\PC^{\bowtie}$ and $\PAff^{\bowtie}$ to the list of examples.
\end{Rem}

\begin{Thm}\label{Theorem PC an Paff bowtie are simple}
The groups $\PC^{\bowtie}$ and $\PAff^{\bowtie}$ are simple.
\end{Thm}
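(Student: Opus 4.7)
Denote by $G$ either $\PC^{\bowtie}$ or $\PAff^{\bowtie}$ and by $G^+$ the corresponding orientation-preserving subgroup ($\PC^+$ or $\PAff^+$), which is simple by the cited results of Arnoux and Guelman--Liousse. My plan is to combine the simplicity of $G^+$ and of $\IET^{\bowtie}$ with the fact that these two subgroups together generate $G$: every $h \in G$ may be written as a product of flips (which lie in $\IET^{\bowtie}$) times an element of $G^+$, by applying on the image side the $J$-flip for each image interval $J$ on which $h$ reverses orientation, exactly as in the proof of Proposition~\ref{Proposition Decomposition in widehat(IET^bowtie)}.

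Let $N$ be a nontrivial normal subgroup of $G$. Then $N \cap G^+$ is normal in $G^+$ and $N \cap \IET^{\bowtie}$ is normal in $\IET^{\bowtie}$, so each is either trivial or the whole ambient simple group. If either intersection is full then $N$ contains $\IET^+ = \IET^{\bowtie} \cap G^+$, which is nontrivial, so the other intersection is nontrivial too and hence also full by simplicity. In that case $N \supseteq \langle G^+, \IET^{\bowtie} \rangle = G$ and we are done.

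It remains to rule out the possibility $N \cap G^+ = \{1\} = N \cap \IET^{\bowtie}$. Pick $n \in N \setm \{1\}$. Since $\mfS_{\mathrm{fin}} \subseteq G^+$, the hypothesis $N \cap G^+ = \{1\}$ prevents $n$ from being finitely supported; combined with the finiteness of its discontinuity set, this yields a point $x_0$ at which $n$ is continuous and $n(x_0) \neq x_0$. Choose an open interval $U$ around $x_0$ contained in the continuity set of $n$ and small enough that $U \cap n(U) = \emptyset$, and pick a nontrivial $g \in G^+$ supported in $U$: for $G = \PC^{\bowtie}$ take $g \in \Homeo^+(\mathopen{[}0,1\mathclose{[})$ supported in $U$, and for $G = \PAff^{\bowtie}$ take $g$ to be a nontrivial piecewise linear homeomorphism supported in $U$ (a PL bump suffices).

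Now examine $[n,g] \in N$. The element $ngn^{-1}$ is supported in $n(U)$, and on $n(U)$ it is the conjugate of $g$ by the monotone restriction $n|_U$; since $g$ is orientation-preserving and $n|_U$ has a constant sign, the two occurrences of $n$ contribute the same sign and $ngn^{-1}$ is orientation-preserving, so $ngn^{-1} \in G^+$ and therefore $[n,g] = (ngn^{-1})\, g^{-1} \in G^+$. On the other hand, a direct computation using $n^{-1}(U) \cap U = \emptyset$ shows that $[n,g]$ coincides with $g^{-1}$ on $U$, so $[n,g] \neq 1$; this contradicts $[n,g] \in N \cap G^+ = \{1\}$. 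The main technical point I expect will require care is the verification that $ngn^{-1}$ stays in $G^+$ in both the $\PC$ and the $\PAff$ settings and the explicit construction of a nontrivial PL bump supported in an arbitrarily small $U$; both are routine but need to be spelled out.
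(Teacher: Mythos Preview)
Your argument is correct, and it runs parallel to the paper's proof rather than along it. Both use the same backbone --- $G = \langle G^+, \IET^{\bowtie}\rangle$ together with the simplicity of $G^+$ and of $\IET^{\bowtie}$ --- but they differ in how a nontrivial element of $N$ is forced into one of these simple subgroups. The paper commutes a nontrivial $g \in N$ with the $I$-flip on a small interval moved off itself by $g$, and then identifies the commutator explicitly (the product of the $I$-flip and the $g(I)$-flip in the affine case, a conjugate of a single flip in the continuous case); this yields $N \supseteq \IET^{\bowtie}$ first, and then $N \supseteq G^+$. You instead commute $n \in N$ with a bump $g \in G^+$ supported in such an interval and observe that $ngn^{-1} \in G^+$ because the sign of $n|_U$ cancels; this gives $N \supseteq G^+$ first, and then $N \supseteq \IET^{\bowtie}$. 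Your route is arguably cleaner since it bypasses the explicit description of the flip-commutator (which, in the merely continuous case, does require a conjugation in $\PC^{\bowtie}$ to recognise it as a flip). One cosmetic point: the clause ``$\mfS_{\mathrm{fin}} \subseteq G^+$'' is misphrased, since you are working in the quotient where $\mfS_{\mathrm{fin}}$ is already trivial; what you actually need, and use, is just that $n \neq 1$ in $G$ forces any representative to move some point of its continuity set.
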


\begin{Lem}\label{Lemma generators for widehat IET bowtie and IET bowtie}
The group $\IET^{\bowtie}$ is generated by flips (=images of flips from $\widehat{\IET^{\bowtie}}$).
\end{Lem}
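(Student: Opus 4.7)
The plan is to reduce to $\IET^+$ via Proposition \ref{Proposition Decomposition in widehat(IET^bowtie)}, then to factor each element of $\IET^+$ as a product of \emph{subinterval rotations}, and finally to realise each such rotation as a product of three flips. Given $h \in \IET^{\bowtie}$, pick a lift $\widetilde h \in \widehat{\IET^{\bowtie}}$ and apply Proposition \ref{Proposition Decomposition in widehat(IET^bowtie)} to write $\widetilde h = r \sigma f$ with $r$ a product of flips, $\sigma \in \mfS_{\mathrm{fin}}$, and $f \in \widehat{\IET^+_{\mathrm{rc}}}$. Projecting to $\IET^{\bowtie}$ kills $\sigma$, leaving $h = \bar r \cdot \bar f$ with $\bar r$ a product of flips and $\bar f$ identified with an element of $\IET^+$ via $\widehat{\IET^+_{\mathrm{rc}}} \simeq \IET^+$. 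It therefore suffices to show every element of $\IET^+$ lies in the subgroup of $\IET^{\bowtie}$ generated by flips.

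The key computation is the following identity in $\IET^{\bowtie}$. For $0 < \beta < a \leq 1$, let $R_{a, \beta}$ denote the subinterval rotation acting as $x \mapsto x + \beta \bmod a$ on $[0, a[$ and as the identity on $[a, 1[$, and let $F_I$ denote the image in $\IET^{\bowtie}$ of the $I$-flip. Then
\[
R_{a, \beta} = F_{[0, a[} \circ F_{[0, a - \beta[} \circ F_{[a - \beta, a[},
\]
as one verifies by direct case analysis on $\mathopen{]}0, a - \beta\mathclose{[}$, $\mathopen{]}a - \beta, a\mathclose{[}$, and $\mathopen{]}a, 1\mathclose{[}$; the two sides coincide except on the finite set $\{0, a - \beta\}$, and this discrepancy is absorbed by passing to the quotient by $\mfS_{\mathrm{fin}}$. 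Translating this formula expresses any rotation of any subinterval $[c, c + a[$ of $[0, 1[$ as a product of three flips.

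It remains to show that every $f \in \IET^+$ is a product of subinterval rotations, by induction on the number $n$ of intervals in $\cP_f^{\min}$ (formulated on arbitrary half-open subintervals of $[0, 1[$, so as to allow restriction). The case $n = 1$ forces $f = \Id$. For $n > 1$, let $I_1 = [0, a[$ be the leftmost interval of $\cP_f^{\min}$ and write $f(I_1) = [c, c + a[$. If $c = 0$ then the inductive hypothesis applies directly to $f|_{[a, 1[}$. If $c > 0$, let $R$ be the subinterval rotation of $[0, c + a[$ by $c$; then $R^{-1} \circ f$ fixes $I_1$ pointwise. The essential observation is that the only interior discontinuity of $R^{-1}$, located at $c$, coincides with the left endpoint of $f(I_1)$ and is therefore a boundary point of the image partition $f(\cP_f^{\min})$; consequently $R^{-1}$ does not subdivide any $f(I_j)$ for $j \neq 1$, and the restriction of $R^{-1} \circ f$ to $[a, 1[$ is an $\IET^+$ element with at most $n - 1$ minimal intervals. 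Induction then yields $R^{-1} \circ f$, and hence $f = R \circ (R^{-1} \circ f)$, as a product of subinterval rotations. The main obstacle lies precisely in this bookkeeping within the induction: one must verify that the interaction between $R$'s discontinuities and $f$'s image partition does not inflate the interval count, which requires the alignment noted above.
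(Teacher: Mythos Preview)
Your proof is correct and follows essentially the same route as the paper. Both arguments reduce to $\IET^+$ via Proposition~\ref{Proposition Decomposition in widehat(IET^bowtie)} and then express the natural generators of $\IET^+$ as products of three flips; your ``subinterval rotations'' are exactly the paper's exchanges $r_{I,J}$ of two consecutive intervals, and your identity $R_{a,\beta}=F_{[0,a[}\circ F_{[0,a-\beta[}\circ F_{[a-\beta,a[}$ is the paper's $r_{I,J}=s_I s_J s_{I\cup J}$ (up to inversion). The only substantive difference is that you supply an explicit induction showing these rotations generate $\IET^+$, whereas the paper simply asserts that the $R_{I,J}$ form a generating set of $\widehat{\IET^+_{\mathrm{rc}}}$; one minor point in your induction is that $R^{-1}$ has a second discontinuity at $c+a$ (not only at $c$), but this is harmless since $c+a$ is also an endpoint of $f(\cP_f^{\min})$.
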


\begin{proof}
By Proposition \ref{Proposition Decomposition in widehat(IET^bowtie)} it is enough to show that $\IET^+$ is generated by flips.\\
For every consecutive, right-open and left-closed subintervals $I$ and $J$ of $\mathopen{[}0,1\mathclose{[}$, we define $R_{I,J}$ the map that exchanges $I$ and $J$. They are elements of $\widehat{\IET^+_{\mathrm{rc}}}$ and they formed a generating set. Then their image $r_{I,J}$ in $\IET^{\bowtie}$ is a generating set of $\IET^{+}$. For every right-open and left-closed subinterval $I$ of $\mathopen{[}0,1\mathclose{[}$, we define $s_I$ the $I$-flip. Take $I$ and $J$ be two consecutive, right-open and left-closed subintervals of $\mathopen{|}0,1\mathclose{[}$. Then $r_{I,J}=s_Is_Js_{I\cup J}$.
\end{proof}

\begin{proof}[Proof of Theorem \ref{Theorem PC an Paff bowtie are simple} (sketched)]~\\
Since the argument in \cite{ArnouxThese} could also be adapted, we only provide a sketch.\\
We work with elements of $\PC^{\bowtie}$; all intervals below are meant modulo finite subsets. Let $N$ be a nontrivial normal subgroup of $\PC^{\bowtie}$ (resp.\ $\PAff^{\bowtie}$). Let $g$ be a nontrivial element of $N$. There exists a subinterval $I$ of $\mathopen{[}0,1\mathclose{[}$ such that:
\begin{enumerate}
\item $g$ is continuous (resp.\ affine) on $I$,
\item $g(I) \cap I = \emptyset$ (modulo finite subsets),
\item $I\cup g(I)\neq [0,1[$ (modulo finite subsets).
\end{enumerate}
Let $f$ be the $I$-flip. If $g$ is affine on $I$ then $h=gfg^{-1}f^{-1}$ is the product of the $I$-flip with the $g(I)$-flip. Observe that $h$ is conjugate to a single flip by a suitable element of $\IET^+$. If $g$ is only continuous then $h$ is still of order $2$ and it is conjugate in $\PC^{\bowtie}$ to a single flip. Conjugating by elements of $\PAff^+$, one obtains that $N$ contains flips of intervals of all possible lengths, and hence contains all flips. Thanks to Lemma \ref{Lemma generators for widehat IET bowtie and IET bowtie} we know that $\IET^{\bowtie}$ is generated by the set of flips thus $N$ contains $\IET^{\bowtie}$, in particular $N$ intersects $\PC^+$ (resp.\ $\PAff^+$) nontrivially. By simplicity of $\PC^+$ (resp.\ $\PAff^+$) we deduce that $N$ contains $\PC^{\bowtie}=\langle \PC^+,\IET^{\bowtie}\rangle$ (resp.\ $\PAff^{\bowtie}=\langle \PAff^+,\IET^{\bowtie} \rangle$).
\end{proof}

\section{About some Normalizers}\label{Section About some Normalizers}

Here we show that computing normalizers inside $\widehat{\PC^{\bowtie}}$ and $\PC^{\bowtie}$ may leads to different behaviour. We look the case of $\PC^+$, $\IET^+$ and $\widehat{\PC^+_{\mathrm{rc}}}$ and $\widehat{\IET^+_{\mathrm{rc}}}$.

\begin{Prop}\label{Proposition Normalizer of IET^+}
The normalizer of $\IET^+$ in $\IET^{\bowtie}$ is reduced to $\IET^{\pm}$.
\end{Prop}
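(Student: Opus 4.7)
The plan is to prove the equality by showing both inclusions. The easy direction $\IET^{\pm} \subseteq N_{\IET^{\bowtie}}(\IET^+)$ reduces to the fact that $\cR$ normalizes $\IET^+$: for any $f \in \IET^+$, the conjugate $\cR f \cR^{-1}$ acts on each piece of its partition with orientation type $(-)(+)(-) = +$, so it lies in $\IET^+$.

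For the reverse inclusion, let $g \in \IET^{\bowtie}$ be in the normalizer. Let $\cP_g^{\min} = \lbrace I_1, \ldots, I_n \rbrace$ be its minimal associated partition and, for each $j$, let $\eta_j \in \lbrace +, - \rbrace$ record whether $g$ is order-preserving or order-reversing on the interior $I_j^{\circ}$. The central claim, from which the proposition follows, is that all $\eta_j$ are equal.

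To establish this claim, assume for contradiction that there exist indices $j$ and $k$ with $\eta_j = +$ and $\eta_k = -$. Choose subintervals $K \subset I_j^{\circ}$ and $L \subset I_k^{\circ}$ of the same positive length, and let $f \in \IET^+$ be the involution that swaps $K$ and $L$ by translation while fixing the rest of $\mathopen{[}0,1\mathclose{[}$. Restricting $gfg^{-1}$ to the subinterval $g(K) \subset g(I_j)^{\circ}$ yields the composition $g|_L \circ f|_K \circ g^{-1}|_{g(K)}$ in which $g^{-1}|_{g(K)}$ and $f|_K$ both preserve orientation while $g|_L$ reverses it. Hence $gfg^{-1}$ is order-reversing on $g(K)$ and so cannot lie in $\IET^+$, contradicting the normalization hypothesis.

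Once the claim is established, the conclusion is direct: if every $\eta_j$ equals $+$ then $g \in \IET^+$; if every $\eta_j$ equals $-$ then the global orientation reversal of $\cR$ cancels the piecewise reversal of $g$, so $\cR g$ is order-preserving on each $I_j$, giving $\cR g \in \IET^+$ and therefore $g \in \cR \cdot \IET^+$. Either way, $g \in \IET^{\pm}$. The only point that requires a little care is that elements of $\IET^{\bowtie}$ are defined modulo $\mfS_{\mathrm{fin}}$, so the notion of orientation on a piece and the test whether $gfg^{-1}$ lies in $\IET^+$ must be read on interiors; by taking $K$ and $L$ strictly inside $I_j^{\circ}$ and $I_k^{\circ}$, the local computation that produces the contradiction is robust against the finite endpoint ambiguity.
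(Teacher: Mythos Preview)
Your proof is correct and follows essentially the same strategy as the paper: verify $\IET^{\pm}$ normalizes $\IET^+$ via $\cR$, then for $g\notin\IET^{\pm}$ exhibit an $f\in\IET^+$ swapping two intervals on which $g$ has opposite orientation types so that $gfg^{-1}$ reverses orientation on a subinterval. Your construction is in fact slightly leaner, using two intervals on the domain side, whereas the paper uses four intervals on the image side, but this is only a cosmetic difference.
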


\begin{proof}
Let $f \in \IET^{+}$ and $g \in \IET^{\pm}$. If $g \in \IET^+$ then $gfg^{-1} \in \IET^+$. We assume $g \in \IET^-$ then $gfg^{-1}=(g \circ \cR) \circ (\cR \circ f \circ \cR) \circ (\cR \circ g) \in \IET^+$.\\
For the inclusion from left to right, let $g \in \IET^{\bowtie} \setm \IET^{\pm}$ and let $\widehat{g}$ be a representative of $g$ in $\widehat{\IET^{\bowtie}}$. Hence we can find $I,J,K,L$ four right-open and left-closed intervals of the same length such that their image by $\widehat{g}$ are intervals and such that $\widehat{g}$ is order-reversing on $I$ and order-preserving on $J,K$ and $L$.
We define $\widehat{f} \in \widehat{\IET^+}$ as the element which exchanges $\widehat{g}(I)$ with $\widehat{g}(J)$ and $\widehat{g}(K)$ with $\widehat{g}(L)$ while fixing the rest of $\mathopen{[}0,1 \mathclose{[}$. Then the image $f$ of $\widehat{f}$ in $\IET^{+}$ is not trivial and $\widehat{g}\widehat{f}\widehat{g}^{-1} \notin \widehat{\IET^+}$ implies $gfg^{-1} \notin \IET^+$.
\end{proof}
~\\

A similar argument stands for the case of $\PC$ thus we obtain:

\begin{Prop}
The normalizer of $\PC^+$ in $\PC^{\bowtie}$ is reduced to $\PC^{\pm}$. \qed
\end{Prop}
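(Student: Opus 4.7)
My plan is to adapt the argument of Proposition~\ref{Proposition Normalizer of IET^+}, exploiting the fact that $\PC^{\bowtie}$ permits arbitrary order-preserving continuous bijections between intervals rather than only isometries; this removes the matching-length constraint that forced the IET version to use four intervals $I,J,K,L$.

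The easy inclusion $\PC^{\pm} \subseteq N_{\PC^{\bowtie}}(\PC^+)$ would be handled as follows. For $g \in \PC^+$ it is trivial, and for $g = \cR \cdot g' \in \PC^-$ with $g' \in \PC^+$ the problem reduces to showing $\cR \PC^+ \cR \subseteq \PC^+$. The latter is a piece-by-piece orientation count: for any $f \in \PC^+$, the composition $\cR f \cR$ factors as order-reversing, then order-preserving, then order-reversing on each piece of a suitable partition, yielding an order-preserving map on each piece.

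For the reverse inclusion, I would take $g \in \PC^{\bowtie} \setm \PC^{\pm}$ and fix a lift $\widehat{g} \in \widehat{\PC^{\bowtie}}$ together with an associated partition. Because $g$ is neither piecewise order-preserving nor piecewise order-reversing, there is at least one piece on which $\widehat{g}$ is order-reversing and at least one on which it is order-preserving. I would choose subintervals $I$ and $J$ strictly inside the interiors of two such pieces so that $\widehat{g}(I)$ and $\widehat{g}(J)$ are again intervals, and then define $\widehat{f} \in \widehat{\PC^+}$ as the involution exchanging $I$ and $J$ via an order-preserving continuous bijection (and its inverse), fixing the rest of $\mathopen{[}0,1\mathclose{[}$. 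The image $f$ of $\widehat{f}$ in $\PC^+$ is nontrivial since its support has positive Lebesgue measure.

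The key computation, and essentially the only content of the argument, is determining the orientation of $\widehat{g}\widehat{f}\widehat{g}^{-1}$ on $\widehat{g}(I)$: this map factors as $\widehat{g}(I) \to I \to J \to \widehat{g}(J)$ with successive orientations reversing, preserving, preserving, so the composition is order-reversing on $\widehat{g}(I)$, whence $\widehat{g}\widehat{f}\widehat{g}^{-1} \notin \widehat{\PC^+}$. Because the order-reversing behaviour occurs on an open interval of positive length, it descends intact to $\PC^{\bowtie}$, giving $gfg^{-1} \notin \PC^+$ and therefore $g \notin N_{\PC^{\bowtie}}(\PC^+)$. The only small obstacle is to ensure $I$ and $J$ lie in the interiors of the chosen pieces so that no endpoint identifications can mask the order-reversing phenomenon; this is arranged from the outset by shrinking, exactly as in the IET case.
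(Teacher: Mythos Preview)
Your proof is correct and follows the paper's intended approach: the paper itself gives no separate argument, merely stating ``A similar argument stands for the case of $\PC$'' in reference to Proposition~\ref{Proposition Normalizer of IET^+}. Your simplification from four intervals $I,J,K,L$ down to two, made possible because $\widehat{\PC^+}$ allows an order-preserving continuous bijection between intervals of unequal length, is a valid and natural streamlining of that adaptation.
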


We now take a look to inside $\widehat{\PC^{\bowtie}}$:

\begin{Prop}
The normalizer of $\widehat{\IET^+_{\mathrm{rc}}}$ in $\widehat{\IET^{\bowtie}}$ is $\widehat{\IET^+_{\mathrm{rc}}}$.
\end{Prop}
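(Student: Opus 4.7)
The plan: the inclusion $\widehat{\IET^+_{\mathrm{rc}}} \subseteq N_{\widehat{\IET^{\bowtie}}}(\widehat{\IET^+_{\mathrm{rc}}})$ is automatic, so I focus on the converse. Given $g \in \widehat{\IET^{\bowtie}} \setm \widehat{\IET^+_{\mathrm{rc}}}$, I want to exhibit some $h \in \widehat{\IET^+_{\mathrm{rc}}}$ with $ghg^{-1} \notin \widehat{\IET^+_{\mathrm{rc}}}$. The failure of $g$ to lie in $\widehat{\IET^+_{\mathrm{rc}}}$ splits into two exhaustive cases — either $g$ is orientation-reversing on some interval of $\cP_g^{\min}$, or $g \in \widehat{\IET^+}$ but $g$ is not right-continuous — and I would treat them separately.

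Case 1 (orientation-reversing piece). On an orientation-reversing piece $K \in \cP_g^{\min}$, $g$ acts as $x \mapsto c-x$ for some constant $c$. I would pick two disjoint right-open, left-closed intervals $I_1 = [a_1, a_1+\ell)$ and $I_2 = [a_2, a_2+\ell)$ whose closures lie in $K^\circ$, and let $h \in \widehat{\IET^+_{\mathrm{rc}}}$ be the translation exchange that swaps them. The key observation is that $g(I_i) = (c-a_i-\ell,\, c-a_i]$ is \emph{left}-open, \emph{right}-closed, and a direct calculation shows that $ghg^{-1}$ is just the translation exchange between these two left-open, right-closed intervals (identity elsewhere). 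In particular $ghg^{-1}(c-a_1) = c-a_2$ while $ghg^{-1}(y) = y$ for $y$ slightly larger than $c-a_1$, so $ghg^{-1}$ fails right-continuity at $c-a_1$, giving $ghg^{-1} \notin \widehat{\IET^+_{\mathrm{rc}}}$.

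Case 2 ($g \in \widehat{\IET^+}$ not right-continuous). By Proposition~\ref{Proposition Decomposition in widehat(IET^bowtie)}, whose flip factor vanishes in $\widehat{\IET^+}$, I would write $g = \sigma f$ with $\sigma \in \mfS_{\mathrm{fin}}$ and $f \in \widehat{\IET^+_{\mathrm{rc}}}$; the hypothesis forces $\sigma \neq \Id$ because $\mfS_{\mathrm{fin}} \cap \widehat{\IET^+_{\mathrm{rc}}} = \{\Id\}$. Since $f$ already normalizes $\widehat{\IET^+_{\mathrm{rc}}}$, the obstruction sits with $\sigma$, so I just need $h \in \widehat{\IET^+_{\mathrm{rc}}}$ with $\sigma h \sigma^{-1} \notin \widehat{\IET^+_{\mathrm{rc}}}$. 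Fixing $a$ in the support of $\sigma$ and setting $c = \sigma^{-1}(a) \neq a$, I would take $h$ to be a short translation exchange between two right-open, left-closed intervals — one containing $c$ and no other point of the finite set $\mathrm{supp}(\sigma) \cup \{a\}$, the other disjoint from that set — so that $h(a) = a$ and $h(c) = d$ for some $d \notin \mathrm{supp}(\sigma) \cup \{a\}$. Then $(\sigma h \sigma^{-1})(a) = \sigma(d) = d$, while the right-limit of $\sigma h \sigma^{-1}$ at $a$ agrees with that of $h$ there (they differ on a finite set) and so equals $h(a) = a \neq d$. Hence $\sigma h \sigma^{-1}$ is not right-continuous at $a$.

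The delicate point, I expect, is the bookkeeping in Case 1: verifying that the reflection $x \mapsto c-x$ genuinely interchanges the $[\,\cdot\,)$ and $(\,\cdot\,]$ conventions on the intervals $I_i$, so that the conjugate exchange has the "wrong" one-sidedness and cannot be repaired into an element of $\widehat{\IET^+_{\mathrm{rc}}}$. Once this is pinned down, the failure of right-continuity at $c-a_1$ is automatic. Case 2 is essentially routine modulo the principle that right-limits of piecewise continuous functions are unaffected by modifications on a finite set.
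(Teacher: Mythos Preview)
Your proposal is correct and follows essentially the same two-case strategy as the paper: reduce $g\in\widehat{\IET^+}\setminus\widehat{\IET^+_{\mathrm{rc}}}$ to a nontrivial $\sigma\in\mfS_{\mathrm{fin}}$ via the decomposition $g=\sigma f$, and in each case exhibit an exchange $h\in\widehat{\IET^+_{\mathrm{rc}}}$ whose conjugate fails right-continuity at a specific point. The only differences are cosmetic choices of the exchange intervals---the paper uses two \emph{consecutive} intervals (sharing an endpoint in the support of $\sigma$, respectively lying in the reversing piece), whereas you take two separated intervals with closures in $K^\circ$ in Case~1 and a short exchange moving $c=\sigma^{-1}(a)$ off the support in Case~2; both constructions work for the same reason.
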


\begin{proof}
Let $g$ be an element of $\widehat{\IET^{\bowtie}}$ which is not the identity. There are two cases:
\begin{enumerate}
\item If $g \in \widehat{\IET^+} \setm \widehat{\IET^+_{\mathrm{rc}}}$ then $g=\sigma g'$ with $\sigma \in \mfS_{\mathrm{fin}} \setm \lbrace \Id \rbrace$ and $g' \in \widehat{\IET^+_{\mathrm{rc}}}$. Then for every $f \in \widehat{\IET^+_{\mathrm{rc}}}$ we have $gfg^{-1}=\sigma g'fg'^{-1} \sigma^{-1}$. Thus it is enough to treat the case of $\mfS_{\mathrm{fin}}$. Let us assume $g \in \mfS_{\mathrm{fin}}$ then let $x$ in the support of $g$. There exist two consecutive right-open and left-closed intervals $I$ and $J$ of the same length such that $x$ is the right endpoint of $I$ (and the left endpoint of $J$). Up to reduce $I$ and $J$ we can assume that $I$ does not intersect the support of $g$. Then let $f \in \widehat{\IET^+_{\mathrm{rc}}}$ which exchanges $I$ and $J$ while fixing the rest of $\mathopen{[}0,1 \mathclose{[}$. Then $gfg^{-1}$ exchanges the interior of $I$ with the interior of $J$ but $gfg^{-1}(x)$ is not equal to $f(x)$ because $f(x)$ is the left endpoint of $I$ and $I$ does not intersect the support of $g$. Then we deduce that $gfg^{-1}$ is not right-continuous on $J$.
\item If $g \in \widehat{\IET^{\bowtie}} \setm \widehat{\IET^+}$. Then we can find two consecutive subinterval $I$ and $J$ where $g$ is continuous and order-reversing on $I \cup J$. Let $a$ be the right endpoint of $J$. Let $f$ be the element in $\widehat{\IET^+_{\text{rc}}}$ which exchanges $I$ and $J$. Then $gfg^{-1}$ exchanges the interior of $g(J)$ with the interior of $g(I)$. However the left endpoint of $g(J)$ is send by $g^{-1}$ on $a$ which is fixed by $f$. Then $gfg^{-1}$ fixes the left endpoint of $g(J)$, thus $gfg^{-1}$ is not right-continuous on $g(J)$.
\end{enumerate}
\end{proof}

A similar argument stands for the case of $\PC$ thus we obtain:

\begin{Prop}
The normalizer of $\widehat{\PC^+_{\mathrm{rc}}}$ in $\widehat{\PC^{\bowtie}}$ is $\widehat{\PC^+_{\mathrm{rc}}}$. \qed
\end{Prop}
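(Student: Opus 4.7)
The plan is to adapt the proof of the previous proposition essentially verbatim, exploiting the inclusion $\widehat{\IET^+_{\mathrm{rc}}} \subseteq \widehat{\PC^+_{\mathrm{rc}}}$: the witnesses $f$ used in the $\IET$ case were interval exchanges, hence still lie in $\widehat{\PC^+_{\mathrm{rc}}}$, so the very same conjugations will break right-continuity in the $\PC$ setting. The inclusion of $\widehat{\PC^+_{\mathrm{rc}}}$ in its own normalizer is trivial, so I focus on the converse: given $g \in \widehat{\PC^{\bowtie}} \setm \widehat{\PC^+_{\mathrm{rc}}}$, I produce $f \in \widehat{\PC^+_{\mathrm{rc}}}$ with $gfg^{-1} \notin \widehat{\PC^+_{\mathrm{rc}}}$, splitting into the same two cases as before.

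In the first case, $g \in \widehat{\PC^+} \setm \widehat{\PC^+_{\mathrm{rc}}}$. I factor $g = \sigma g'$ with $g' \in \widehat{\PC^+_{\mathrm{rc}}}$ and $\sigma \in \mfS_{\mathrm{fin}} \setm \lbrace \Id \rbrace$; here $g'$ is obtained by replacing $g$ at the finitely many left endpoints of an associated partition by the right-continuous extension, and $\sigma := g \circ g'^{-1}$ is finitely supported because it fixes the complement of this finite set. Since $g'$ already normalizes $\widehat{\PC^+_{\mathrm{rc}}}$, it suffices to show that $\sigma$ does not. Picking $x$ in the support of $\sigma$ and consecutive right-open left-closed intervals $I, J$ meeting at $x$ with $I$ short enough to be disjoint from the rest of $\mathrm{supp}(\sigma)$, the element of $\widehat{\IET^+_{\mathrm{rc}}} \subseteq \widehat{\PC^+_{\mathrm{rc}}}$ that swaps $I$ and $J$ has conjugate by $\sigma$ failing right-continuity at $x$, exactly as in the $\IET$ proof.

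In the second case, $g \in \widehat{\PC^{\bowtie}} \setm \widehat{\PC^+}$. Then $\cP_g^{\min}$ contains an interval $K$ on which $g$ is order-reversing; subdivide $K$ into two consecutive subintervals $I, J$ of equal length, on which $g$ is still continuous and order-reversing on $I \cup J$. Taking $f \in \widehat{\IET^+_{\mathrm{rc}}}$ to be the element swapping $I$ and $J$, the same endpoint bookkeeping as before shows $gfg^{-1}$ interchanges the interiors of $g(I)$ and $g(J)$ but fixes the left endpoint of $g(J)$ (because $f$ fixes the right endpoint $a$ of $J$, which $g^{-1}$ identifies with that left endpoint), so $gfg^{-1}$ is not right-continuous on $g(J)$.

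I do not foresee a genuine obstacle here: every ingredient of the preceding proposition (decomposition by finitely supported permutations, order-reversing pieces, $\widehat{\IET^+_{\mathrm{rc}}}$-flips as test elements) is available inside $\widehat{\PC^{\bowtie}}$ without modification. The only mild point worth checking is that in Case 2 the continuity (not merely piecewise isometry) of $g$ on $I \cup J$ is what guarantees that $g(I)$ and $g(J)$ are honest consecutive intervals, so the combinatorial picture of the $\IET$ case transports without change.
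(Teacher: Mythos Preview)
Your approach is exactly what the paper intends by ``a similar argument stands'': reuse the $\widehat{\IET^+_{\mathrm{rc}}}$ witnesses from the preceding proposition. Case~1 carries over without issue.

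In Case~2, however, your explicit choice of $I,J$ as a bisection of the \emph{entire} order-reversing piece $K$ breaks the key endpoint identity. With $I\cup J=K$ the point $a$ (the right endpoint of $J$) is a breakpoint of $g$, so there is no reason that $g^{-1}$ sends the left endpoint $c=\lim_{t\to a^-}g(t)$ of $g(J)$ to $a$. Concretely, take $g$ to be the $[0,\tfrac12[$-flip (so $g(0)=0$): then $K=[0,\tfrac12[$, $a=\tfrac12$, $c=0$, and $g^{-1}(c)=0\neq a$; a direct check shows that for your $f$ one gets $gfg^{-1}\in\widehat{\IET^+_{\mathrm{rc}}}$, so this particular $f$ fails as a witness. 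The repair is immediate and is what the paper's looser wording (``we can find two consecutive subintervals $I$ and $J$ where $g$ is continuous and order-reversing on $I\cup J$'') tacitly allows: choose $I\cup J$ strictly inside $K$, so that $a$ lies in the interior of $K$. Then $g$ is continuous at $a$, hence $g(a)=c$, and your endpoint bookkeeping goes through verbatim.
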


\nocite{Arnoux}
\bibliographystyle{abbrv}
\bibliography{biblio}
\end{document}